\newcommand{\burl}[1]{\textcolor{blue}{\url{#1}}}
\numberwithin{equation}{section}
\newtheorem{thm}{Theorem}[section]
\theoremstyle{plain}
\newtheorem{lemma}[thm]{Lemma}
\newtheorem{proposition}[thm]{Proposition}
\newtheorem{theorem}[thm]{Theorem}
\newtheorem{conjecture}[thm]{Conjecture}
\newtheorem{hypothesis}[thm]{Hypothesis}
\newtheorem{remark}[thm]{Remark}
\newcommand{\f}{\widehat{\eta}}
\newcommand\be{\begin{equation}}
\newcommand\ee{\end{equation}}
\newcommand\bea{\begin{eqnarray}}
\newcommand\eea{\end{eqnarray}}
\newcommand\bi{\begin{itemize}}
\newcommand\ei{\end{itemize}}
\newcommand\ben{\begin{enumerate}}
\newcommand\een{\end{enumerate}}
\newcommand\bc{\begin{center}}
\newcommand\ec{\end{center}}
\newcommand\ba{\begin{array}}
\newcommand\ea{\end{array}}
\newcommand{\foh}{\frac{1}{2}}  %onehalf
\newtheorem{rek}[thm]{Remark}
\newcommand{\threecase}[7]{#1 \begin{cases} #2 & \text{{\rm #3}}\\ #4
&\text{{\rm #5}}\\ #6 & \texttt{{\rm #7}} \end{cases}   }
\newcommand{\usp}{\text{USp}}
\newcommand{\soe}{\text{SO(even)}}
\newcommand{\soo}{\text{SO(odd)}}
\newcommand{\so}{\text{O}}
\title{Surpassing the Ratios Conjecture in the 1-level density of Dirichlet $L$-functions}
\author{Daniel Fiorilli}
\email{\textcolor{blue}{\href{fiorilli@umich.edu}{fiorilli@umich.edu}}}
\address{Department of Mathematics, University of Michigan, 530 Church Street, Ann Arbor MI 48109 USA}
\author{Steven J. Miller}
\email{\textcolor{blue}{\href{mailto:sjm1@williams.edu, Steven.Miller.MC.96@aya.yale.edu}{sjm1@williams.edu, Steven.Miller.MC.96@aya.yale.edu}}} \address{Department of Mathematics and Statistics, Williams College, Williamstown, MA 01267}
\subjclass[2010]{11M26, 11M50, 11N13 (primary), 11N56, 15B52 (secondary)}
\keywords{Dirichlet $L$-functions, low-lying zeros, primes in progressions, random matrix theory, ratios conjecture}
\date{\today}
\begin{document}

\thanks{The authors thank Andrew Granville, Chris Hughes, Jeffrey Lagarias, Ze\'ev Rudnick and Peter Sarnak for many enlightening conversations, and the referee for many helpful suggestions. The first named author was supported by an NSERC doctoral, and later on postdoctoral fellowship, as well as NSF grant DMS-0635607, and pursued this work at the Universit\'e de Montr\'eal, the Institute for Advanced Study and the University of Michigan. The second named author was partially supported by NSF grants DMS-0600848, DMS-0970067 and DMS-1265673.}

\begin{abstract}
We study the $1$-level density of low-lying zeros of Dirichlet $L$-functions in the family of all characters modulo $q$, with $Q/2 < q\leq Q$. For test functions whose Fourier transform is supported in $(-3/2, 3/2)$, we calculate this quantity beyond the square-root cancellation expansion arising from the $L$-function Ratios Conjecture of Conrey, Farmer and Zirnbauer. We discover the existence of a new lower-order term which is not predicted by this powerful conjecture. This is the first family where the 1-level density is determined well enough to see a term which is not predicted by the Ratios Conjecture, and proves that the exponent of the error term $Q^{-\frac 12 +\epsilon}$ in the Ratios Conjecture is best possible. We also give more precise results when the support of the Fourier Transform of the test function is restricted to the interval $[-1,1]$. Finally we show how natural conjectures on the distribution of primes in arithmetic progressions allow one to extend the support. The most powerful conjecture is Montgomery's, which implies that the Ratios Conjecture's prediction holds for any finite support up to an error $Q^{-\frac 12 +\epsilon}$.

%%commented out: ; in this case we show that the Ratios Conjecture's prediction is not best possible.

%We study the $1$-level density of zeros of Dirichlet $L$-functions in the family of all characters modulo $q$, with $Q/2 < q\leq Q$. For test functions whose Fourier transform is supported in $(-\frac 32,\frac 32)$, we give an estimate for this quantity which is more precise than the $L$-function Ratios Conjecture of Conrey, Farmer and Zirnbauer, and we discover the existence of a new lower-order term which is not predicted by this powerful conjecture. Our results show that the error term of $Q^{-\frac 12 +\epsilon}$ in the Ratios Conjecture is best possible. We also give more precise results when the support of the Fourier Transform of the test function is restricted to the interval $[-1,1]$; in this case we show that the Ratios Conjecture's prediction is not best possible. Finally we show how natural conjectures on the distribution of primes in arithmetic progressions allow one to extend the support, the most powerful conjecture being Montgomery's, which implies that the Ratios Conjecture's prediction holds for any finite support, up to the error $Q^{-\frac 12 +\epsilon}$.

\end{abstract}

\maketitle

\tableofcontents

%%%%%%%%%%%%%%%%%%%%%%%%%%%%%%%%%%%%%%%%%%%%%%%%%%%%%%%%%%%%%%%%%%%%%%%%%%%%%%%%%%%%%%%%%%%%%%%%%%%%%%%%%%%%%%%%%%%%%%%%%%%%%%
%%%%%%%%%%%%%%%%%%%%%%%%%%%%%%%%%%%%%%%%%%%%%%%%%%%%%%%%%%%%%%%%%%%%%%%%%%%%%%%%%%%%%%%%%%%%%%%%%%%%%%%%%%%%%%%%%%%%%%%%%%%%%%
%%%%%%%%%%%%%%%%%%%%%%%%%%%%%%%%%%%%%%%%%%%%%%%%%%%%%%%%%%%%%%%%%%%%%%%%%%%%%%%%%%%%%%%%%%%%%%%%%%%%%%%%%%%%
%%%%%%%%%%%%%%%%%%%%%%%%%%%%%%%%%%%%%%%%%%%%%%%%%%%%%%%%%%%%%%%%%%%%%%%%%%%%%%%%%%%%%%%%%%%%%%%%%%%%%%%%%%%%
%%%%%%%%%%%%%%%%%%%%%%%%%%%%%%%%%%%%%%%%%%%%%%%%%%%%%%%%%%%%%%%%%%%%%%%%%%%%%%%%%%%%%%%%%%%%%%%%%%%%%%%%%%%%%%%%%%%%%%%%%%%%%%
%%%%%%%%%%%%%%%%%%%%%%%%%%%%%%%%%%%%%%%%%%%%%%%%%%%%%%%%%%%%%%%%%%%%%%%%%%%%%%%%%%%%%%%%%%%%%%%%%%%%%%%%%%%%%%%%%%%%%%%%%%%%%%

\section{Introduction}\label{sec:introduction}

In this paper we study the 1-level density of Dirichlet $L$-functions with modulus $q$. The goal is to compute this statistic for large support and small error terms, providing a test of the predictions of the lower order and error terms in the $L$-function Ratios Conjecture. In this introduction we assume the reader is familiar with low-lying zeros of families of $L$-functions and the Ratios Conjecture, and briefly describe our results. For completeness we provide a brief review of the subject in \S\ref{sec:backgroundprevious}, and state our results in full in \S\ref{sec:unconditionalresults} to \S\ref{sec:resultsbeyondGRH}.

We let $\eta\in  L^1(\mathbb R)$ be an even real function such that $\f$ is $C^2$ and has compact support. Denoting by $\rho_{\chi}=\frac 12+i\gamma_{\chi}$ the non-trivial zeros of $L(s,\chi)$ (i.e., $0<\Re (\rho_{\chi})<1$) and choosing $Q$ a scaling parameter close to $q$,  the 1-level density is\footnote{Since $\widehat \eta$ is $C^2$, we have that $\eta(\xi)\ll \xi^{-2}$ for large $\xi$, hence the sum over the zeros is absolutely convergent. While most of the literature uses $\phi$ as the test function, since we will use Euler's totient function extensively we use $\eta$.}
\begin{equation}\label{main object of interest} D_{1;q}(\eta) \ := \ \frac 1{\phi(q)}\sum_{\chi \bmod q} \sum_{\gamma_{\chi}} \eta\left(\gamma_{\chi} \frac{\log Q}{2\pi} \right);
\end{equation} \emph{throughout this paper a sum over $\chi \bmod q$ always means a sum over all characters, including the principal character.} If we assume GRH then the $\gamma_{\chi}$ are real. As $\eta(y) = \widehat{(\hat\eta)}(y)$ is defined for complex values of $y$, it makes sense to consider \eqref{main object of interest} for complex $\gamma_{\chi}$, in case GRH is false (in other words, GRH is only needed to interpret the 1-level density as a spacing statistic arising from an ordered sequence of real numbers, allowing for a spectral interpretation). We also study the average of \eqref{main object of interest} over the moduli $Q/2<q\leq Q$, which is easier to understand in general: \be\label{eq:onelevelD1QhalvesQ} D_{1;Q/2,Q}(\eta) \ := \ \frac1{Q/2} \sum_{Q/2 < q \le Q} D_{1;q}(\eta). \ee

The powerful Ratios Conjecture of Conrey, Farmer and Zirnbauer \cite{CFZ1,CFZ2} yields a formula for $D_{1;Q/2,Q}(\eta)$ which is believed to hold up to an error $O_{\epsilon}(Q^{-\frac 12+\epsilon})$. While there have been several papers \cite{CS1,CS2,DHP,GJMMNPP,HMM,Mil3,Mil4,MilMo} showing agreement between various statistics involving $L$-functions and the Ratios Conjecture's predictions, evidence for this precise exponent in the error term is limited; the reason this exponent was chosen is the ``philosophy of square-root cancelation''. While some of the families studied have 1-level densities that agree beyond square-root cancelation, it is always for small support (${\rm supp}(\widehat{\eta}) \subset (-1,1)$). Further, in no family studied were non-zero lower order terms beyond square-root cancelation isolated in the $1$-level density.

The motivation of this paper was to resolve these issues. As the ratios conjecture is used in a variety of problems, it is important to test its predictions in the greatest possible window. Our key findings are the following.

\begin{itemize}
\item We uncover new, non-zero lower-order terms in the $1$-level density for our families of Dirichlet characters. These terms are beyond what the Ratios Conjecture can predict, and suggest the possibility that a refinement may be possible and needed.

\item We show (unconditionally) that the natural limit of accuracy of the $L$-function Ratios Conjecture is $\Omega(Q^{-\frac 12+o(1)})$. Thus the error term cannot be improved for a general family of $L$-functions, though of course its veracity for all families is still open.
\end{itemize}

%%%%%OPTION 2:
%%%%%In this paper we show that for test functions $\eta$ whose Fourier Transform is supported in the interval $(-\frac 32,\frac 32)$, the exponent %%%%%$-1/2$ in the error term of the Ratios Conjecture is admissible and best possible. We moreover discover a new lower-order term of order %%%%%$Q^{-\frac 12}/\log Q$, which is not predicted by the Ratios Conjecture.

%Note that we are normalizing all of the zeros by the same factor, $\frac{\log Q}{2\pi}$. This corresponds to the global rescaling, and facilitates performing averaging over the family as we can move the family sum past the test function to the arithmetic coefficients.\footnote{It is possible to consider the local rescaling, where for a character of conductor $q$ we instead use $(\log q)/2\pi$. If we were studying the 2-level density, this would be necessary. See \cite{Mil1} for more on local versus global rescaling.}

%\begin{rem} In many papers in the literature, the roles of $f$ and $\widehat{f}$ are reversed, with the sum in \eqref{main object of interest} being the test function evaluated at the scaled zeros, and not the Fourier transform. As most of our computations are on the prime side, we prefer to evaluate the scaled zeros at the Fourier transform of the test function, and have the test function on the arithmetic side. \end{rem}

The existence of lower-order terms beyond the Ratios Conjecture's prediction in statistics of $L$-functions is not without precedent. Indeed such terms have been isolated in the second moment of $|L(\tfrac 12,\chi)|$ by Heath-Brown \cite{HB}, and for a more general shifted sum by Conrey \cite{C}.

Before stating our main result, we give the Ratios Conjecture's prediction. This prediction is done for a slightly different family in \cite{GJMMNPP}, but it is trivial to convert from their formulation to the one below (we discuss the conversion in \S\ref{sec:unconditionalresults}).

\begin{conjecture}[Ratios Conjecture]\label{conj:ratiosconj}
The 1-level density $D_{1;q}(\eta)$ (from \eqref{main object of interest} with scaling parameter $Q=q$) equals
\begin{equation}\label{eq:ratiosconjpred} \widehat{\eta}(0) \left(  1-\frac { \log(8\pi e^{\gamma})}{\log q}-\frac{\sum_{p\mid q}\frac{\log p}{p-1}}{\log q}\right) +\int_0^{\infty}\frac{\widehat{\eta}(0)-\widehat{\eta}(t)}{q^{t/2}-q^{-t/2}} dt + O_{\epsilon}\left(q^{-\frac{1}2+\epsilon}\right). \end{equation} The 1-level density $D_{1;Q/2,Q}(\eta)$ (from rescaling\footnote{\label{footnote:rescaling} To rescale we multiply \eqref{eq:ratiosconjpred} by $\log q/\log Q$, replace $q^{t/2}-q^{-t/2}$ with $Q^{t/2}-Q^{-t/2}$ and average over $Q/2 < q \leq Q$. The term $\log q$ averages to $\log Q +\log 2 -1+O(\log Q/Q)$, explaining the ``additional'' term $(\log 2-1)/\log Q$. Moreover the average of $\sum_{p\mid q} \frac{\log p}{p-1}$ over this range is easily shown to be $\sum_{p} \frac 1{p(p-1)} +O(\log Q/Q)$.}  \eqref{eq:ratiosconjpred}) equals \begin{equation} \widehat{\eta}(0) \left( 1 -\frac{\log(4\pi e^{\gamma})+1}{\log Q} - \frac{\sum_p \frac{\log p}{p(p-1)} }{\log Q}\right) + \int_0^{\infty}\frac{\widehat{\eta}(0) - \widehat{\eta}(t)}{Q^{t/2}-Q^{-t/2}} dt + O_{\epsilon}\left(Q^{-\frac{1}2+\epsilon}\right). \end{equation}
\end{conjecture}

%[STEVE: please check this last statement]  [Dan: Having trouble following this. Is the last statement the error term? Probably want the error replacing q with Q. I find it a bit strange we go from an 8 in the logarithm to a 4 in the next equation with a plus 1 outside. We should talk.]

%A major goal of this work was to verify the detailed predictions the Ratios Conjecture makes for our family.

%Our goal is not only to prove such an estimate, but also to obtain even more precise estimates when $\widehat{\eta}$ is supported in $(-\frac 32,\frac 32)$, revealing new lower-order terms.
%We are also interested in expanding the support of $\widehat{\eta}$ as much as we can, assuming natural hypotheses on the distribution of primes in arithmetic progressions. The larger $\sigma:=\sup(\text{supp} \widehat{\eta})$ is, the stronger the needed hypotheses. We also exclude the trivial case by assuming that $\sigma>0$.
Surprisingly, our techniques are capable of not only verifying this prediction, but we are able to determine the 1-level density beyond what even the Ratios Conjecture predicts. In Theorem \ref{first thm} we obtain a new (arithmetical) term of order $Q^{-\frac 12}/\log Q$, which is not predicted by the Ratios Conjecture.

\begin{theorem}\label{first thm} Assume GRH.
If the Fourier Transform of the test function $\eta$ is supported in $(-\frac 32, \frac 32)$, then $D_{1;Q/2,Q}(\eta)$ equals
\begin{equation}
\widehat{\eta}(0) \left( 1 -\frac{\log(4\pi e^{\gamma})+1}{\log Q} - \frac{\sum_p \frac{\log p}{p(p-1)} }{\log Q}\right)+\int_0^{\infty}\frac{\widehat{\eta}(0)-\widehat{\eta}(t)}{Q^{t/2}-Q^{-t/2}} dt + \frac{Q^{-1/2}}{\log Q}S_{\eta}(Q),
 \label{equation 3/2 2}
\end{equation}
where
\be S_{\eta}(Q)\ = \ C_1 \widehat{\eta}(1) + C_2 \frac{\widehat{\eta}'(1)}{\log Q} +  O\left(  \Big(\frac {\log\log Q}{\log Q} \Big)^2\right),\ee
with
\bea C_1 & \ :=\ & (2-\sqrt 2) \zeta\left( \frac 12\right) \prod_p \left( 1+\frac 1{(p-1)p^{1/2}}\right) \nonumber\\
 C_2 & \ :=\ &  C_1  \left(\frac{\sqrt 2+4}3 -\left( \frac{\zeta'}{\zeta} \left(\frac 12\right) -\sum_p \frac{\log p}{(p-1)p^{1/2}+1}\right) \right).\eea
%\begin{multline} S_{\eta}(Q)=(2-\sqrt 2) \zeta\left( \frac 12\right) \prod_p \left( 1+\frac 1{(p-1)p^{1/2}}\right) \times \\
%\Bigg(  \widehat{\eta}(1) + \bigg(\frac{\sqrt 2+4}3 -\left( \frac{\zeta'}{\zeta} \left(\frac 12\right) -\sum_p \frac{\log p}{(p-1)p^{1/2}+1}\right) \bigg) \frac{\widehat{\eta}'(1)}{\log Q}
% \Bigg)+O\left( \frac {(\log\log Q)^2}{(\log Q)^2}\right).
% \end{multline}

\end{theorem}
We can give a more precise formula for the term $S_{\eta}(Q)$: see Remark \ref{rmk:more precise first thm}. While Theorem \ref{first thm} is conditional on GRH, in Theorem \ref{thm unconditional} we prove a more precise and unconditional result for test functions $\eta$ whose Fourier transform has support contained in $[-1,1]$.

The first two terms in \eqref{equation 3/2 2} agree with the Ratios Conjecture's Prediction. As for the term $Q^{-\frac 12}S_{\eta}(Q)/\log Q$,
its presence confirms that the error term $Q^{-\frac 12+o(1)}$ in the Ratios Conjecture is best possible, and suggests more generally that the $1$-level density of a family ought to contain a (possibly oscillating) arithmetical term of order $Q^{-\frac 12+o(1)}$, a statement which should be tested in other families. Interestingly this new term contains the factors $\hat{\eta}(1)$ and $\f'(1)$, and is zero when $\widehat \eta$ is supported in $(-1,1)$. In this case we give a more precise estimate for the $1$-level density in Theorem \ref{thm unconditional}, in which a lower-order term of order $Q^{\sigma/2 -1 +o(1)}$ appears, where $\sigma= \sup(\text{supp } \f)$. This term is a genuine lower-order term, and shows that for such test functions the Ratios Conjecture's prediction is not best possible. We thus show that a transition happens when $\sigma$ is near $1$. Indeed looking at the difference between the $1$-level density and the Ratios Conjecture's prediction, that is defining \be E_Q(\eta):= D_{1;Q/2,Q}(\eta)- \widehat{\eta}(0) \left( 1 -\frac{\log(4\pi e^{\gamma})+1}{\log Q} - \frac{\sum_p \frac{\log p}{p(p-1)} }{\log  Q}\right) - \int_0^{\infty}\frac{\widehat{\eta}(0) - \widehat{\eta}(t)}{Q^{t/2}-Q^{-t/2}} dt, \ee
our results imply that\footnote{For $\sigma>1$, this holds for test functions $\eta$ for which either $\f(1)\neq 0$ or $\f'(1)\neq 0$ (see Theorem \ref{first thm}); see Theorem \ref{thm unconditional} if $\sigma \le 1$. If $\f(u)$ vanishes in a small interval around $u=1$, then Theorem \ref{thm 3/2} gives the correct answer.} $E_Q(\eta)=Q^{-\mu(\sigma)+o(1)}$, where
\be \mu(\sigma) = \begin{cases}
\frac{\sigma}2 -1 &\text{ if } \sigma \leq 1 \\
-\frac 12 & \text{ if } 1\leq \sigma < \frac 32.
\end{cases}\ee
We conjecture that $\mu(\sigma)$ should equal $-1/2$ for all $\sigma \geq 1$, and that our new lower-order term $Q^{-\frac 12}S_{\eta}(Q)/\log Q $ should persist in this extended range.

\begin{conjecture}
Theorem \ref{first thm} holds for test functions $\eta$ whose Fourier transform has arbitrarily large finite support $\sigma$.
\label{conjecture all support}
\end{conjecture}

In Figure \ref{figure mu(sigma)}, the solid curve represents our results (Theorems \ref{first thm} and \ref{thm unconditional}), and the dashed line represents Conjecture \ref{conjecture all support}; note the resemblance between this graph and the one appearing in Montgomery's pair correlation conjecture \cite{Mon2}. We prove in Theorem \ref{strongest thm} that Montgomery's Conjecture on primes in arithmetic progressions implies that $\mu(\sigma)\leq -1/2$ for all $\sigma \geq 1$.

\begin{figure}
\begin{center}
\includegraphics[scale=.6]{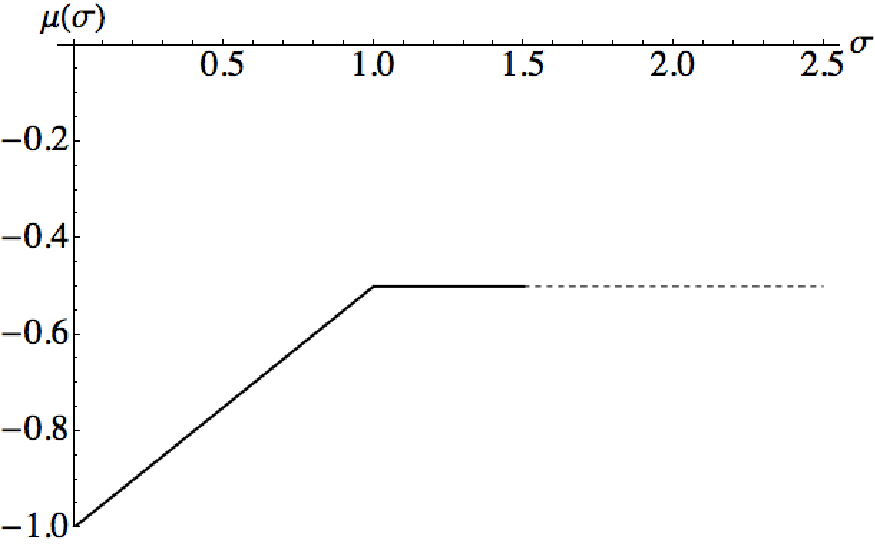}
\caption{\label{figure mu(sigma)} The graph of $\mu(\sigma)$.}
\end{center}
\end{figure}

%MATHEMATICA CODE:
%f[x_] := Piecewise[{{-1 + x/2, 0 < x < 1}, {-1/2,
%    1 < x < 3/2}}, {Null, x > 1.5}]
%
%g[x_] := Piecewise[{{Null, x < 1.5}, {-1/2, x > 1.5}}]
%
%Plot[{f[t], g[t]}, {t, 0, 3}, PlotRange -> {-1, 0},
% PlotStyle -> {Directive[Black, Thick], Directive[Black, Dashed]},
% TicksStyle -> Directive[25],
% AxesLabel -> {Style[\[Sigma], Large],
%   Style["\[Mu](\[Sigma])", Large]} ]

We believe that this phenomenon is universal and should also happen in different families, in the sense that we believe that the Ratios Conjecture's prediction should be best possible for $\sigma \geq 1$, and should not be for $\sigma<1$. For example, in \cite{Mil4} it is shown that if the Fourier transform of the involved test function is supported in $(-1,1)$, then the Ratios Conjecture's prediction is not best possible and one can improve the remainder term; however, in this region of limited support there are no new, non-zero lower order terms unpredicted by the Ratios Conjecture.
These results confirm the exceptional nature of the transition point $\sigma =1$, as is the case in Montgomery's Pair Correlation Conjecture \cite{Mon2}. Indeed if this last conjecture were known to hold beyond the point $\alpha =1$, then this would imply the non-existence of Landau-Siegel zeros \cite{CI}.

Our plan of attack is to use the explicit formula to turn the $1$-level density into an average of the various terms appearing in this formula. The bulk of the work is devoted to carefully estimating the contribution of the prime sum, which when summing over $\chi \bmod q$ becomes a sum over primes in the residue class $1 \bmod q$, averaged over $q\sim Q$. Accordingly, the proof of Theorem \ref{first thm} is based on ideas used in the recent results of the first named author \cite{fiorilli}, which improve on results of Fouvry \cite{fouvry}, Bombieri, Friedlander and Iwaniec \cite{BFI}, Friedlander and Granville \cite{FG} and Friedlander, Granville, Hildebrandt and Maier \cite{FGHM}. Theorem 1.1 of \cite{fiorilli} cannot be applied directly here, since this estimate is only valid when looking at primes up to $x$ modulo $q$ with $q\sim Q$, where $Q$ is not too close to $x$. Additional estimates are needed, including a careful analysis of the range $x^{1-\epsilon} < Q \leq x$, which required a combination of divisor switching techniques and precise estimates on the mean value of smoothed sums of the reciprocal of Euler's totient function. Additionally, in our analysis of the $1$-level density after using the explicit formula and executing the sum over the family we obtain a sum over primes in the arithmetic progressions $1\bmod q$; this is one of the cases in which one obtains an asymptotic in Theorem 1.1 of \cite{fiorilli}, which explains the occurrence of the lower-order term $Q^{-\frac 12} S_{\eta}(Q)/\log Q$ in Theorem \ref{first thm}.

%DELETED: \footnote{To be more precise, this is explained by Lemma \ref{sum reciprocal totient lemma}, since Theorem 1.1 of \cite{fiorilli} does not %apply directly here because of the reasons mentioned earlier.}
%
%Added difficulty, --- (does not follow directly, since results don't work for $M$ fixed,...)
%solve this way --- (use Mellin transform, go back to the core of the argument, divisor switching.)
%
%\textbf{DAN: can you add a bit more on the meat? This might be needed to sell the paper as more than a standard application of known results.}

The paper is organized as follows. In \S\ref{sec:backgroundprevious} we review previous results on low-lying zeros in families of $L$-functions and describe the motivation for the Ratios Conjecture. See for example \cite{GJMMNPP, Mil4} for a detailed description of how to apply the Ratios Conjecture to predict the 1-level density. We describe our unconditional results in \S\ref{sec:unconditionalresults}, and then improve our results in \S\ref{sec:resultsunderGRH} by assuming GRH. In previous families there often is a natural barrier, and extending the support is related to standard conjectures (for example, in \cite{ILS} the authors show  how cancelation in exponential sums involving square-roots of primes leads to larger support for families of cuspidal newforms). A similar phenomenon surfaces here, where in \S\ref{sec:resultsbeyondGRH} we show that increasing the support beyond $(-2,2)$ is related to conjectures on the distribution of primes in residue classes. We analyze the increase in support provided by various conjectures. These range from a conjecture on the variance of primes in the residue classes, which allow us to reach $(-4, 4)$, to Montgomery's conjecture for a fixed residue, which gives us any finite support. The next sections contain the details of the proof; we state the explicit formula and prove some needed sums in \S\ref{sec:expformulaestimates}, and then prove our theorems in the remaining sections.

%\subsection{Unconditional results}
%By restricting the support of $\f$ to $[-1,1]$, we are able to obtain unconditional results. We are able to compute the lower-order terms incredibly well, gaining a power savings beyond what the Ratios Conjecture can predict. These lower-order terms will be studied separately in \S\ref{section terms}.

%%%%%%%%%%%%%%%%%%%%%%%%%%%%%%%%%%%%%%%%%%%%%%%%%%%%%%%%%%%%%%%%%%%%%%%%%%%%%%%%%%%%%%%%%%%%%%%%%%%%%%%%%%%%%%%%%%%%%%%%%%%%%%
%%%%%%%%%%%%%%%%%%%%%%%%%%%%%%%%%%%%%%%%%%%%%%%%%%%%%%%%%%%%%%%%%%%%%%%%%%%%%%%%%%%%%%%%%%%%%%%%%%%%%%%%%%%%%%%%%%%%%%%%%%%%%%
%%%%%%%%%%%%%%%%%%%%%%%%%%%%%%%%%%%%%%%%%%%%%%%%%%%%%%%%%%%%%%%%%%%%%%%%%%%%%%%%%%%%%%%%%%%%%%%%%%%%%%%%%%%%%%%%%%%%%%%%%%%%%%
\section{Background and New Results}

\subsection{Background and Previous Results}\label{sec:backgroundprevious}

%The story is different near the central point. In this case, an individual $L$-function no longer provides enough zeros to average over, and one instead studies families of $L$-functions. The Katz and Sarnak Density Conjecture states that the behavior of zeros near the central point, in the limit as the conductors tend to infinity, agrees with the scaling limit of eigenvalues near 1 of a classical compact group.

Assuming GRH, the non-trivial zeros of any nice $L$-function lie on the critical line, and therefore it is possible to investigate statistics of its normalized zeros. These zeros are fundamental in many problems, ranging from the distribution of primes in congruence classes to the class number \cite{CI,Go,GZ,RubSa}. Numerical and theoretical evidence \cite{Hej,Mon2,Od1,Od2,RS} support a universality in behavior of zeros of an individual automorphic $L$-function high above the central point, specifically that they are well-modeled by ensembles of random matrices (see \cite{FM,Ha} for histories of the emergence of random matrix theory in number theory). The story is different for the low-lying zeros, the zeros near the central point. A convenient way to study these zeros is via the 1-level density, which we now describe. Let $\eta\in L^1(\mathbb R)$ be an even real function whose Fourier transform
\begin{equation}
  \label{eq:9}
 \hat{\eta}(y)\ =\ \int_{-\infty}^\infty \eta(x) e^{-2\pi ixy}dx
\end{equation}
is $C^2$ and has compact support. Let $\mathcal{F}_N$ be a (finite) family of
$L$-functions satisfying GRH.\footnote{\label{footnote:spectral}We often do not need GRH for the analysis, but only to interpret the results. If the GRH is true, the zeros lie on the critical line and can be ordered, which suggests the possibility of a spectral interpretation.} The $1$-level density associated to
$\mathcal{F}_N$ is defined by
\begin{equation}
\label{eq:7} D_{1;\mathcal{F}_N}(\eta)\ =\ \frac1{|\mathcal{F}_N|}
\sum_{g\in\mathcal{F}_N} \sum_{j} \eta\left(\frac{\log
c_g }{2\pi}\gamma_g^{(j)}\right),
\end{equation}
where $\foh + i\gamma_g^{(j)}$ runs through the non-trivial zeros
of $L(s,g)$. Here $c_g$ is the ``analytic conductor'' of $g$, and
gives the natural scale for the low zeros. As $\eta$ decays,
only low-lying zeros (i.e., zeros within a distance
$1/\log c_g$ of the central point $s=1/2$) contribute
significantly. Thus the $1$-level density can help identify the
symmetry type of the family. To evaluate \eqref{eq:7}, one applies the explicit formula,
converting sums over zeros to sums over primes.

Based in part on the function-field analysis where $G(\mathcal{F})$ is the monodromy group associated to the family
$\mathcal{F}$, Katz and Sarnak conjectured that for each reasonable irreducible family of $L$-functions there is an associated symmetry group $G(\mathcal{F})$ (one of the following five: unitary~$U$, symplectic~$\usp$, orthogonal~$\so$, $\soe$, $\soo$), and that the distribution of critical zeros near $1/2$ mirrors the distribution of eigenvalues near~$1$. The five groups have distinguishable $1$-level densities. To date, for suitably restricted test functions the statistics of zeros of many natural families of $L$-functions have been shown to agree with statistics of eigenvalues of matrices from the classical compact groups, including Dirichlet $L$-functions, elliptic curves, cuspidal newforms, Maass forms, number field $L$-functions, and symmetric powers of ${\rm GL}_2$ automorphic representations \cite{AM,AAILMZ,DM1,FI,Gao,Gu,HM,HR,ILS,KaSa1,KaSa2,Mil1,MilPe,RR,Ro,Rub,ShTe,Ya,Yo2}, to name a few, as well as non-simple families formed by Rankin-Selberg convolution \cite{DM2}.

In addition to predicting the main term (see for example \cite{Con,KaSa1,KaSa2,KeSn1,KeSn2,KeSn3}), techniques from random matrix theory have led to models that capture the lower order terms in their full arithmetic glory for many families of $L$-functions (see for example the moment conjectures of \cite{CFKRS} or the hybrid model in \cite{GHK}). Since the main terms agree with either unitary, symplectic or orthogonal symmetry, it is only in the lower order terms that we can break this universality and see the arithmetic of the family  enter. These are therefore natural and important objects to study, and can be isolated in many families \cite{HKS,Mil2,Yo1}. We thus require a theory that is capable of making detailed predictions. Recently the $L$-function Ratios Conjecture \cite{CFZ1,CFZ2} has had great success in determining lower order terms. Though a proof of the Ratios Conjecture for arbitrary support is well beyond the reach of current methods, it is an indispensable tool in current investigations as it allows us to easily write down the predicted answer to a remarkable level of precision, which we try to prove in as great a generality as possible.

%\footnote{In large part they arise from the higher moments of the Satake parameters, whose first two moments lead to the main term in the low-lying zeros and whose higher moments control how quickly we approach the limiting behavior. This is reminiscent of the role the higher moments of a distribution play in the proof of the Central Limit Theorem. The situation is similar with low-lying zeros, and was a key ingredient in the early universality results about $n$-level correlations in \cite{RS}.}

To study the 1-level density, it suffices to obtain good estimates for \be R_{\mathcal{F}_N}(\alpha,\gamma)\ :=\ \frac1{|\mathcal{F}_N|} \sum_{g \in \mathcal{F}_N} \frac{L(1/2 + \alpha, g)}{L(1/2 + \gamma, g)}. \ee (In the current paper, the parameter $Q$ plays the role of $|\mathcal{F}_N|$.) Asymptotic formulas for $R_{\mathcal{F}_N}(\alpha,\gamma)$ have been conjectured for a variety of families $\mathcal{F}_N$ (see \cite{CFZ1,CS1,CS2,GJMMNPP,HMM,Mil3,Mil4,MilMo}) and are believed to hold up to errors of size $|\mathcal{F}_N|^{-1/2+\epsilon}$ for any $\epsilon > 0$. The evidence for the correctness of this error term is limited to test functions with small support (frequently  significantly less than $(-1,1)$), though in such regimes many of the above papers verify this prediction. Many of the steps in the Ratios Conjecture's recipe lead to the addition or omission of terms as large as those being considered, and thus there was uncertainty as to whether or not the resulting predictions should be accurate to square-root cancelation. The results of the current paper can be seen as a confirmation that this is the right error term for the final predicted answer, at least in this family.  Further, the novelty in our results resides in the fact that we are able to go beyond square-root cancelation and we find a smaller term which is unpredicted by the Ratios Conjecture (see Theorem \ref{first thm}). For a precise explanation on how to derive the Ratios Conjecture's prediction in our family, we refer the reader to \cite{GJMMNPP}, and also recommend \cite{CS1} for an accessible overview of the Ratios Conjecture.

\subsection{Unconditional Results}\label{sec:unconditionalresults}
We now describe our unconditional results. We remind the reader that $\eta$ is a real even function such that $\widehat \eta$ is $C^2$ and has compact support.
\begin{theorem}
\label{thm unconditional}
Suppose that the Fourier transform of the test function $\eta$ is supported on the interval $[-1,1]$, so $\sigma = \sup(\text{\emph{supp} } \f)\leq 1$. There exists an absolute positive constant $c$ (coming from the Prime Number Theorem) such that the 1-level density $D_{1;q}(\eta)$ (from \eqref{main object of interest} with scaling parameter $Q=q$) equals
\bea\label{eq:d1qhatfexp} & & \f(0) \left(  1-\frac { \log(8\pi e^{\gamma})}{\log q}-\frac{\sum_{p\mid q}\frac{\log p}{p-1}}{\log q}\right)
+\int_0^{\infty}\frac{\f(0)-\f(t)}{q^{t/2}-q^{-t/2}} dt \nonumber\\
& &\ \ \ \ \  -\frac 2{\phi(q)} \int_0^{1} q^{u/2}\left(\frac {\f(u)}2 -\frac {\f'(u)}{\log q}\right) du -\ \frac 2{\log q} \sum_{\substack{p^{\nu}\parallel q \\ p^e \equiv 1 \bmod q/p^{\nu} \\ e, \nu \geq 1}}\frac{\log p}{\phi(p^{\nu})p^{e/2}}\f\left( \frac{\log p^{e}}{\log q}\right)\nonumber\\ & & \ \ \ \ \  +\ O\left(\frac{q^{\frac{\sigma}2-1}}{e^{c \sqrt{\sigma\log q}}}\right).\eea
\end{theorem}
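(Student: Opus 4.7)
The plan is to apply the explicit formula to each $L(s,\chi)$ for $\chi\bmod q$, average over the family, and exploit the support condition $\sigma\leq 1$ to render most prime contributions vacuous. For each non-principal $\chi$, I would write $\chi$ via its primitive inducer $\chi^*$ of conductor $q^*\mid q$ and apply the Guinand--Weil explicit formula to $L(s,\chi^*)$. The principal character $\chi_0$ must be singled out: since $L(s,\chi_0)=\zeta(s)\prod_{p\mid q}(1-p^{-s})$ is induced from the trivial character, its zero-sum is controlled by $\zeta$'s explicit formula with the pole at $s=1$, which (after the $1/\phi(q)$ averaging) will supply the displayed $-\tfrac{2}{\phi(q)}\int_0^1 q^{u/2}(f(u)/2-f'(u)/\log q)\,du$ term, the $f'/\log q$ piece arising naturally via integration by parts.

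The archimedean (conductor and digamma) pieces, averaged over $\chi\bmod q$, will produce (i) the leading $f(0)$ together with the correction $-f(0)\log(8\pi e^{\gamma})/\log q$ via Stirling-type expansions of $\psi\bigl(\tfrac14+\tfrac{i\pi\tau}{\log q}\bigr)$ integrated against $\widehat{f}(\tau)$; (ii) the Ratios-style integral $\int_0^\infty (f(0)-f(t))/(q^{t/2}-q^{-t/2})\,dt$ from a careful contour/residue manipulation of the $\Gamma'/\Gamma$ integrand; and (iii) the correction $-\frac{f(0)}{\log q}\sum_{p\mid q}\frac{\log p}{p-1}$ from the conductor drop $\log q\mapsto \log q^*$ averaged via $\frac{1}{\phi(q)}\sum_{d\mid q}\phi^*(d)\log(q/d)$, where $\phi^*(d)$ counts primitive characters mod $d$.

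The decisive observation is on the prime side. Orthogonality collapses the averaged prime sum to an indicator that $p^k\equiv 1\pmod{q}$ for $p\nmid q$. Under $\sigma\leq 1$ the support condition forces $p^k\leq q^\sigma\leq q$, while $p^k\equiv 1\pmod q$ with $p^k>1$ forces $p^k\geq q+1$; hence the entire $p\nmid q$ prime sum \emph{vanishes}. This is the crucial structural reason for an unconditional theorem at $\sigma\leq 1$; larger support requires cancellation hypotheses on primes in arithmetic progressions, as taken up in later sections. For $p\mid q$, contributions arise only from imprimitive characters whose inducer $\chi^*$ satisfies $p\nmid q^*$, i.e., $q^*\mid q/p^\nu$ where $p^\nu\parallel q$; summing via the identity $\sum^{*}_{\chi\bmod d}\chi(n)=\sum_{e\mid\gcd(d,n-1)}\mu(d/e)\phi(e)$ (valid for $(n,d)=1$) reproduces the stated sum with condition $p^e\equiv 1\pmod{q/p^\nu}$.

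The main obstacle will be extracting the principal character's contribution with the error $O(q^{\sigma/2-1}/e^{c\sqrt{\sigma\log q}})$. I would treat the relevant piece via a Mellin--Perron contour, shifting past $\Re s=1/2$ and collecting the residue at the pole of $\zeta$ to produce the $\int_0^1 q^{u/2}$ main term. The savings $e^{-c\sqrt{\sigma\log q}}$ beyond the power savings demand the Vinogradov--Korobov zero-free region for $\zeta$ so that the contour can be pushed as far left as permissible; the factor $q^{\sigma/2-1}$ reflects the terminal contour position combined with the $1/\phi(q)\asymp q^{-1}$ normalization. The prime-power tails $\sum_{k\geq 2}p^{-k/2}\ll 1$ and the imprimitive-conductor bookkeeping require routine but careful tracking to preserve the error quality uniformly as $\sigma\to 0^+$.
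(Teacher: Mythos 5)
Your central structural observation is exactly right and is precisely the heart of the paper's argument: after orthogonality the family-averaged prime sum is governed by $n\equiv 1\pmod q$, and for $\sigma\leq 1$ every such $n$ with $\Lambda(n)\neq 0$ satisfies $n\geq q+1>q^{\sigma}$, so the progression sum $\psi(q^{u};q,1)$ vanishes identically on $[0,1]$. The bookkeeping of the archimedean pieces, of the conductor drop $\log q\mapsto\log q^{*}$, and of the imprimitive contributions with $p\mid q$ (giving the $p^{e}\equiv 1\pmod{q/p^{\nu}}$ sum) also matches what the paper does in Proposition~\ref{proposition explicit formula}.

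Where your route genuinely diverges is in the treatment of $\chi_0$ and in the origin of the $-\frac{2}{\phi(q)}\int_0^1 q^{u/2}\bigl(\frac{f(u)}{2}-\frac{f'(u)}{\log q}\bigr)\,du$ term. The paper does \emph{not} apply the explicit formula to $\chi_0$ at all: it applies Weil's formula only to the non-principal characters, and bounds the leftover $\frac{1}{\phi(q)}\sum_{\gamma_\zeta}\widehat f(\gamma_\zeta\tfrac{\log Q}{2\pi})$ trivially as $O(1/\phi(q))$ using $\widehat f(y)\ll y^{-2}$. In that decomposition the displayed integral term does not come from a pole at $s=1$; it comes from the $-\frac{1}{\phi(q)}\sum_n$ piece that orthogonality produces when the principal character is subtracted from $\sum_{\chi\bmod q}\chi(n)$, i.e., from $-\psi(q^{u})/\phi(q)$ inside $T_4$. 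The paper then simply inserts the classical Prime Number Theorem $\psi(q^{u})=q^{u}+O\bigl(q^{u}e^{-c\sqrt{u\log q}}\bigr)$. Your plan to expand $\chi_0$'s explicit formula and pull the term out of the residue of $\zeta$ at $s=1$ is an equivalent decomposition (the two are linked by $\zeta$'s own explicit formula, where the pole and the prime sum differ only by the bounded zero sum $\sum_{\gamma_\zeta}\widehat f$), but it is more roundabout and, if you carry it through, you must be careful to reconcile the resulting $\Phi(0)+\Phi(1)$ contribution with the sign and the precise $\frac{f}{2}-\frac{f'}{\log q}$ form stated in the theorem, since the two presentations differ by boundary terms and lower-order pieces after integration by parts.

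On the error term: the exponent $e^{-c\sqrt{\sigma\log q}}$ is the signature of the classical de la Vall\'ee Poussin zero-free region, i.e.\ the textbook PNT error, not Vinogradov--Korobov (which would give a sharper $e^{-c(\log q)^{3/5}(\log\log q)^{-1/5}}$). Invoking Vinogradov--Korobov and a Mellin--Perron contour shift is not wrong, but it is unnecessary machinery here; the paper gets the stated error by applying the PNT directly to $\psi(q^{u})$ pointwise in $u$ and then splitting the range of integration at $u=\sigma/2$ to balance $q^{u/2}$ against $e^{-c\sqrt{u\log q}}$. Aligning your plan with that simpler route would both shorten the argument and avoid any ambiguity about where, along the contour, the residue versus the shifted integral contribute.
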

%(One could replace $q^{t/2}-q^{-t/2}$ by $2\sinh((t/2)\log q)$ in the first integral above.)

\begin{remark}\label{rek:beatratios}
%If $\sigma < 1$, as there are no primes or prime powers congruent to 1 modulo $q$ and dividing $q$, the lower order term involving sums of $p^\nu \parallel q$ vanishes.
The average over $Q/2<q \leq Q$ of the fourth term in \eqref{eq:d1qhatfexp} can be shown to be $O(Q^{-1})$, and is therefore negligible when considering $D_{1;Q/2,Q}(\eta)$ (see \eqref{eq:average of T_2 is small}). However, the term involving the second integral in \eqref{eq:d1qhatfexp} is of size $q^{\sigma/2-1-o(1)} $, and thus constitutes a genuine lower-order term, smaller than the error term in \eqref{eq:ratiosconjpred} predicted using the Ratios Conjecture.
\end{remark}

Theorems \ref{first thm} and \ref{thm unconditional} should be compared to the main result of Goes, Jackson, Miller, Montague, Ninsuwan, Peckner and Pham  \cite{GJMMNPP}, where they show one can extend the support of $\f$ to $[-2,2]$ and still get the main term, as well as the lower order terms down to a power savings. They only consider $q$ prime, and thus the sum over primes $p$ dividing $q$ below in Theorem \ref{thm millergoesal}   is absorbed by their error term. We briefly discuss how one can easily extend their results to the case of general $q$. First note that $L(s,\chi)$ and $L(s,\chi^\ast)$ have the same zeros in the critical strip if $\chi^\ast$ is the primitive character of conductor $q^\ast$ inducing the non-principal character $\chi$ of conductor $q$. We now have $\log q^\ast$, which can be converted to a sum over primes $p$ dividing $q$ by the same arguments as in the proof of Proposition \ref{proposition explicit formula}. The rest of the expansion follows from expanding the digamma function $\Gamma'/\Gamma$ in the integral in Theorem 1.3 of \cite{GJMMNPP} and then standard algebra (along the lines of the computations in \S\ref{sec:expformulaestimates}). We use Lemma 12.14 of \cite{montgomery}, which in our notation says that for $a, b > 0$ we have \be \int_{-\infty}^\infty \frac{\Gamma'(a \pm i b \tau)}{\Gamma(a \pm i b \tau)} \eta(t) dt \ = \ \frac{\Gamma'(a)}{\Gamma(a)} \widehat{\eta}(0) + \frac{2\pi}{b} \int_{0}^\infty \frac{\exp(-2\pi a x / b)}{1 - \exp(-2\pi x/b)} \left(\widehat{\eta}(0) - \widehat{\eta}(\mp x)\right)dx,\ee and the identity \be \frac{\Gamma'(1/4)}{\Gamma(1/4)} + \frac{\Gamma'(3/4)}{\Gamma(3/4)} \ = \ -2\gamma - 6 \log 2, \ee with $\gamma$ the Euler-Mascheroni constant. We then extend to $q \in (Q/2, Q]$ by rescaling the zeros by $\log Q$ and not $\log q$ and summing over the family; recall the technical issues involved in the rescaling are discussed in Footnote \ref{footnote:rescaling}.

\begin{theorem}[Goes, Jackson, Miller, Montague, Ninsuwan, Peckner, Pham \cite{GJMMNPP}]
\label{thm millergoesal}
If $1<\sigma\leq 2$, then the 1-level density $D_{1;q}(\eta)$ (from \eqref{main object of interest} with scaling parameter $Q=q$) equals
\begin{equation}\label{equation support [-2,2]}\f(0) \left(  1-\frac { \log(8\pi e^{\gamma})}{\log q}-\frac{\sum_{p\mid q}\frac{\log p}{p-1}}{\log q}\right)
+\int_0^{\infty}\frac{\f(0)-\f(t)}{q^{t/2}-q^{-t/2}} dt
    +O\left(\frac{\log\log q}{\log q}q^{\frac{\sigma}2-1}\right),
 \end{equation} and this agrees with the Ratios Conjecture.
\end{theorem}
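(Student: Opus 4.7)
My plan is to follow the same explicit-formula skeleton used for Theorem \ref{thm unconditional}, but to track carefully the \emph{new} contributions that appear when $\sigma > 1$, then bound them by Brun--Titchmarsh. Applying the Riemann--Weil explicit formula to each $L(s,\chi)$ and averaging over $\chi \bmod q$, I obtain a sum of three pieces: the archimedean/gamma-factor main term $f(0)(1 - \log(8\pi e^\gamma)/\log q)$, an integral piece over the critical strip producing (via standard manipulations) the term $\int_0^\infty (f(0)-f(t))/(q^{t/2}-q^{-t/2})\,dt$, and a prime-power sum
\[
-\frac{2}{\log q}\sum_\chi \frac{1}{\phi(q)} \sum_{p,e} \frac{\chi(p^e)\log p}{p^{e/2}} \widehat{f}\!\left(\frac{\log p^e}{\log q}\right).
\]
Orthogonality of characters collapses the $\chi$-sum to a restriction $p^e \equiv 1 \bmod q$ (with $p \nmid q$), up to the usual $p\mid q$ defect that produces the $f(0)\sum_{p\mid q}\log p/(p-1)\cdot(\log q)^{-1}$ contribution to the main term in \eqref{equation support [-2,2]}.

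Next I would partition the resulting sum according to whether $p^e \le q$ or $p^e > q$. In the ``old'' range $p^e \le q$, the condition $p^e \equiv 1 \bmod q$ with $p \nmid q$ forces only terms already accounted for by the analysis of Theorem \ref{thm unconditional}: these produce exactly the terms $f(0)[1 - \log(8\pi e^\gamma)/\log q - \sum_{p\mid q}(\log p/(p-1))/\log q]$ plus the integral, together with the two error-bearing terms $-\frac{2}{\log q}\sum_{p^\nu\parallel q,\, p^e\equiv 1\bmod q/p^\nu}(\log p/\phi(p^\nu)p^{e/2})f(\log p^e/\log q)$ and $-\frac{2}{\phi(q)}\int_0^1 q^{u/2}(f(u)/2 - f'(u)/\log q)\,du$. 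Both of these extra pieces are easily shown to be $O((\log\log q/\log q)\, q^{\sigma/2-1})$: the former because $\omega(q) \ll \log q/\log\log q$ and the summation condition $p^e \equiv 1 \bmod q/p^\nu$ is very restrictive, while the latter is $O(q^{\sigma/2-1}\log\log q/\log q)$ after using $\phi(q) \gg q/\log\log q$ and $\sigma \le 2$.

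The core new work concerns the ``large'' range $q < p^e \le q^\sigma$ which is empty for $\sigma \le 1$ but populated for $1 < \sigma \le 2$. I would split by $e$: the $e=1$ piece is
\[
\frac{-2}{\log q}\!\!\sum_{\substack{q < p \le q^\sigma \\ p \equiv 1\,(q)}} \frac{\log p}{\sqrt{p}}\, \widehat{f}\!\left(\frac{\log p}{\log q}\right),
\]
which, via the Brun--Titchmarsh inequality $\pi(x;q,1) \ll x/(\phi(q)\log(x/q))$ and partial summation against $\log u/\sqrt{u}$, is $\ll q^{\sigma/2}/\phi(q) \cdot (\log q)^{-1} \ll (\log\log q/\log q)\, q^{\sigma/2-1}$, exactly matching the stated error. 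The $e \ge 2$ cases give at most $O(q^{\sigma/2 - 1})$ by the trivial bound $\#\{p \le q^{\sigma/e} : p^e \equiv 1 \bmod q\} \ll q^{\sigma/e}/q$ together with summation on $e$.

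The principal difficulty is that no asymptotic for $\pi(x;q,1)$ in the range $q < x \le q^2$ is available unconditionally, so one cannot isolate a secondary main term from this region; the argument depends crucially on the fact that the \emph{prediction} of the Ratios Conjecture itself drops its claim of square-root cancellation precisely at the scale where Brun--Titchmarsh gives only an upper bound. Matching the target error $O((\log\log q/\log q)\,q^{\sigma/2-1})$ exactly consumes the $\log\log q$ loss coming from $\phi(q)^{-1}$. Agreement with the Ratios Conjecture is then automatic, since the main terms produced coincide with \eqref{equation support [-2,2]}.
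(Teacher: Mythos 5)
Your overall route coincides with the paper's: start from the explicit formula, absorb the $p\mid q$ term and the $-\frac{2}{\phi(q)}\int_0^1 q^{u/2}(\cdots)\,du$ term into the error, and control the new range $q<p^e\le q^{\sigma}$ by Brun--Titchmarsh, with the $\log\log q$ coming from $\phi(q)\gg q/\log\log q$ and the $1/\log q$ from the prefactor together with the integration of $q^{u/2}$. Your $e=1$ analysis is essentially right, with one technical caveat: if you do partial summation against $\pi(x;q,1)\le 2x/(\phi(q)\log(x/q))$ starting at $x=q$, the factor $1/\log(x/q)$ is not integrable at the lower endpoint, so you must first dispose of the range $q<p\le q^{1+\epsilon}$ trivially (at most $\ll q^{\epsilon}$ integers $\equiv 1\bmod q$ there, contributing $\ll q^{-1/2+\epsilon}$); this is precisely the paper's splitting of the $u$-integral at $1+\epsilon$, and it is an easy repair.

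The genuine gap is your treatment of the prime powers $e\ge 2$. The claimed ``trivial bound'' $\#\{p\le q^{\sigma/e}: p^e\equiv 1\bmod q\}\ll q^{\sigma/e}/q$ is false: for $e\ge 2$ and $\sigma\le 2$ the right-hand side is $\le 1$, but the congruence only confines $p$ to the residue classes $b\bmod q$ with $b^e\equiv 1\bmod q$, of which there can be roughly $e^{\omega(q)}$, so there may be many such primes below $q$ (for instance $e=2$ and $q\mid p^2-1$ with $p\asymp\sqrt q$). Moreover $e$ runs up to about $\sigma\log q/\log 2$, and $e^{\omega(q)}$ can reach a power of $q$ when $e\asymp\log q$, so one cannot simply sum a per-$e$ class count over $e$. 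The conclusion survives, but the correct argument is the one in the paper: bound the number of admissible classes by $2e^{\omega(q)}$, use $\omega(q)\ll\log q/\log\log q$, and split $2\le e\le 2/\epsilon$ (where $e^{\omega(q)}=q^{o(1)}$ and Brun--Titchmarsh is applied classwise) from $e>2/\epsilon$ (where one bounds by all primes up to $x^{1/e}\le x^{\epsilon/2}$), obtaining $\sum_{e\ge 2}\sum_{p\le x^{1/e},\,p^e\equiv 1\bmod q}\log p\ll_{\epsilon}x^{\epsilon}\bigl(1+x^{1/2}/q\bigr)$; this contributes $\ll q^{-1/2+O(\epsilon)}$, which is absorbed in the stated error since $\sigma>1$. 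With that replacement your proof matches the paper's.
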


\begin{remark}
Goes et al. \cite{GJMMNPP} actually proved \eqref{equation support [-2,2]} for any $\sigma\leq 2$, with the additional error term $O(q^{-1/2+\epsilon})$. We prefered not to include the case $\sigma\leq 1$, as Theorem \ref{thm unconditional} is more precise in this range.
\end{remark}

%\begin{remark} Note that in \eqref{equation support [-2,2]} we get the asymptotic for any $\sigma\leq 2$ (even $\sigma=2$).
%\end{remark}

%Our next results are conditional on GRH. While assuming GRH alone is not sufficient to understand the asymptotic behavior of the 1-level density for $f$ with unrestricted support, it does extend the support to $\sigma > 1$. We again find interesting lower-order terms, beyond what the Ratios Conjecture can predict.

%%%%%%%%%%%%%%%%%%%%%%%%%%%%%%%%%%%%%%%%%%%%%%%%%%%%%%%%%%%%%%%%%%%%%%%%%%%%%%%%%%%%%%%%%%%%%%%%%%%%%%%%%%%%%%%%%%%%%%%%%%%%%%
%%%%%%%%%%%%%%%%%%%%%%%%%%%%%%%%%%%%%%%%%%%%%%%%%%%%%%%%%%%%%%%%%%%%%%%%%%%%%%%%%%%%%%%%%%%%%%%%%%%%%%%%%%%%%%%%%%%%%%%%%%%%%%
\subsection{Results under GRH}\label{sec:resultsunderGRH}

We first mention a more precise version of Theorem \ref{first thm}.
\begin{remark}
\label{rmk:more precise first thm}
If in addition to the hypotheses of Theorem \ref{first thm} we assume that the Fourier transform of the test function $\eta$ is $K+1$ times continuously differentiable, then we can give a more precise expression for the term $S_{\eta}(Q)$ appearing in \eqref{equation 3/2 2}: \be S_{\eta}(Q) \ =\ \sum_{i=0}^K \frac{a_i(\eta)}{(\log Q)^i} + O_{\epsilon,K}\left( \frac {1}{(\log Q)^{K+1-\epsilon}}\right), \ee where the $a_i(\eta)$ are constants depending (linearly) on the Taylor coefficients of $\f(t)$ at $t=1$. In fact, $S_{\eta}(Q)$ is a truncated linear functional, which composed with the Fourier Transform operator is supported on $\{1\}$ (in the sense of distributions).
\end{remark}

Our next result is an extension of Theorem \ref{first thm}, in the case where $\hat{\eta}(u)$ vanishes in a small interval to the right of $u=1$.
\begin{theorem}\label{thm 3/2} Assume GRH.

\begin{enumerate}

\item If $\hat{\eta}$ is supported in $(-\frac 32, -1-\kappa] \cup [-1,1] \cup [1+\kappa, \frac 32)$ for some $\kappa>0$, then for any $\epsilon >0$ the average 1-level density $D_{1;Q/2,Q}(\eta)$ equals
\begin{eqnarray}\label{equation 3/2 3}
& & \hat{\eta}(0) \left( 1-\frac{1+\log(4\pi e^{\gamma})}{\log Q} -\frac{\sum_p \frac{\log p}{p(p-1)}}{\log Q}\right)+\int_0^{\infty}\frac{\f(0)-\f(t)}{Q^{t/2}-Q^{-t/2}} dt
    \nonumber\\
& & \ \ \ \ \ - \ \frac{4\log 2}Q \frac{\zeta(2)\zeta(3)}{\zeta(6)} \int_0^{1} Q^{u/2}\left(\frac {\hat{\eta}(u)}2 -\frac {\hat{\eta}'(u)}{\log Q}\right) du
\nonumber\\
& & \ \ \ \ \  - \ \int_{1+\kappa}^{4/3}(  (u-1)\log Q+C_6  ) Q^{-u/2} \left(\frac {\hat{\eta}(u)}2 -\frac {\hat{\eta}'(u)}{\log Q}\right) du \nonumber \\
& & \ \ \ \ \ + \ O_{\epsilon}(Q^{-\frac 12-\kappa+\epsilon}+Q^{-\frac 23}\log Q+Q^{\sigma-2}\log Q),
\end{eqnarray}
with $C_6:= \log (\pi/2) +1+ \gamma + \sum_p \frac{\log p}{p(p-1)}$.\\
Note that for $\sigma \geq \frac 43$, unless $\hat{\eta}(x)$ has some mass near $x=\lambda$ for some $1<\lambda<4-2\sigma$, the fourth term in \eqref{equation 3/2 3} goes in the error term (and hence \eqref{equation 3/2 3} reduces to \eqref{equation 3/2 1}). However, if $1<\sigma<\frac 43$, it is always a genuine lower-order term of size $Q^{-\sigma/2+o(1)}$.

\item If $f$ is supported in $(-2,-a] \cup [-1,1] \cup [a,2)$ for some $1\leq a<2$ (if $a=1$, then we have the full interval $(-2,2)$), then we have that $D_{1;Q/2,Q}(\eta)$ equals
\begin{eqnarray}
& & \hat{\eta}(0) \left( 1-\frac{1+\log(4\pi e^{\gamma})}{\log Q} -\frac{\sum_p \frac{\log p}{p(p-1)}}{\log Q}\right)+\int_0^{\infty}\frac{\f(0)-\f(t)}{Q^{t/2}-Q^{-t/2}} dt
    \nonumber\\ & & \ \ \ \ \ \ \ - \  \frac{4\log 2}Q \frac{\zeta(2)\zeta(3)}{\zeta(6)} \int_0^{1} Q^{u/2}\left(\frac {\f(u)}2 -\frac {\f'(u)}{\log Q}\right) du
+ O(Q^{-\frac a2}+Q^{\sigma-2}\log Q).
\label{equation 3/2 1}
\end{eqnarray}
Unless $a>1$ and $\sigma<\frac 32$, the third term of \eqref{equation 3/2 1} goes in the error term.\\

\end{enumerate}

\end{theorem}

\subsection{Results beyond GRH}\label{sec:resultsbeyondGRH}

As the GRH is insufficient to compute the 1-level density for test functions supported beyond $[-2, 2]$, we explore the consequences of other standard conjectures in number theory involving the distribution of primes among residue classes. Before stating these conjectures, we first set the notation. Let
\bea \psi(x)\ :=\  \sum_{n \le x} \Lambda(n), \hspace{1cm}  \psi(x,q,a)\ :=\ \sum_{\substack{n \le x \\ n \equiv a \bmod q}} \Lambda(n), \eea
\be  E(x,q,a) \ :=\  \psi(x,q,a) -
\frac{\psi(x)}{\phi(q)}.\ee If we assume GRH, we have that

\begin{equation}
\label{eq:GRHbounds} \psi(x) = x + O(x^{\frac 12}(\log x)^2), \hspace{1.5cm} E(x,q,a)  = O(x^{\frac 12} (\log x)^2).
\end{equation}

Our first result uses GRH and the following de-averaging hypothesis, which depends on a parameter $\delta \in [0,1]$.

\begin{hypothesis}\label{de-averaging hypothesis}
We have
\be\label{eq:deavehypotheta} \sum_{Q/2<q\leq Q}\left|\psi(x;q,1) - \frac{\psi(x)}{\phi(q)} \right|^2\ \ll\ Q^{\delta-1} \sum_{Q/2<q\leq Q} \sum_{\substack{1\leq a \leq q : \\ (a,q)=1}} \left| \psi(x;q,a) - \frac{\psi(x)}{\phi(q)}\right|^2.\ee
\end{hypothesis}

This hypothesis is trivially true for $\delta = 1$, and while it is unlikely to be true for $\delta = 0$, it is reasonable to expect it to hold for any $\delta>0$. What we need is some control over biases of primes congruent to $1 \bmod q$. For the residue class $a \bmod q$,
$\left|\psi(x;q,a) - \frac{\psi(x)}{\phi(q)}\right|^2$ is the variance; the above conjecture can be
interpreted as bounding $\left|\psi(x;q,1) - \frac{\psi(x)}{\phi(q)}\right|^2$ in terms of the average
variance.\footnote{Note that we only need this de-averaging hypothesis for the special residue class $a=1$. }

\stepcounter{X}

Under these hypotheses, we show how to extend the support to a wider but still limited range.

\begin{theorem}
Assume GRH and Hypothesis \ref{de-averaging hypothesis} for some $\delta \in (0,1)$. The average 1-level density $D_{1;Q/2,Q}(\f)$ equals
\begin{multline} \hat{\eta}(0) \left( 1-\frac{1+\log(4\pi e^{\gamma})}{\log Q} -\frac{\sum_p \frac{\log p}{p(p-1)}}{\log Q}\right)+\int_0^{\infty}\frac{\f(0)-\f(t)}{Q^{t/2}-Q^{-t/2}} dt  \\
+O(Q^{\frac{\delta-1}2} (\log Q)^{\frac 32}+Q^{\frac{\sigma+2\delta}4-1} (\log Q)^{\frac 13}),
\label{equation thm 4-2eta}
\end{multline}
which is asymptotic to $\f(0)$ provided the support of $\f$ is contained in $(-4+2\delta,4-2\delta)$.
\label{thm 4-2eta}
\end{theorem}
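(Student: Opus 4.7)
The plan is to run the explicit-formula machinery as in Theorem \ref{thm 3/2}, and then combine Hypothesis 1.11$_\eta$ with a Barban--Davenport--Halberstam variance bound to overcome the $\sigma=2$ barrier imposed by pure GRH. First, apply the explicit formula to each $L(s,\chi)$ and sum over $\chi\bmod q$ using the orthogonality relation $\phi(q)^{-1}\sum_{\chi\bmod q}\chi(n) = \mathbf{1}_{n\equiv 1\bmod q,\,(n,q)=1}$. The Archimedean part, once averaged over $Q/2<q\leq Q$, reproduces the first two main terms in \eqref{equation thm 4-2eta}, and the prime side reduces to
\[
-\frac{2}{\log Q}\sum_{\substack{p^e\equiv 1\bmod q\\ (p,q)=1}}\frac{\log p}{p^{e/2}}\,f\!\left(\frac{e\log p}{\log Q}\right).
\]
Partial summation turns this into an integral against $\psi(x;q,1)$ over $1\leq x\leq Q^\sigma$, and writing $\psi(x;q,1)=\psi(x)/\phi(q)+E(x;q,1)$ absorbs the diagonal piece back into the main terms (using the PNT under GRH) and reduces the entire proof to bounding
\[
\mathcal{E}\;:=\;\frac{1}{Q\log Q}\sum_{Q/2<q\leq Q}\int_{Q/2}^{Q^\sigma}\frac{|E(x;q,1)|}{x^{3/2}}\,dx,
\]
up to bounded multiplicative factors coming from $f$ and $f'$.

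The central averaging step combines Cauchy--Schwarz in $q$ with Hypothesis 1.11$_\eta$:
\[
\sum_{Q/2<q\leq Q}|E(x;q,1)| \;\leq\; Q^{1/2}\Bigl(\sum_{q}|E(x;q,1)|^2\Bigr)^{\!1/2} \;\ll\; Q^{\eta/2}\Bigl(\sum_{q}\sum_{(a,q)=1}|E(x;q,a)|^2\Bigr)^{\!1/2}.
\]
Under GRH, the Barban--Davenport--Halberstam bound gives $\sum_{q\sim Q}\sum_a|E(x;q,a)|^2 \ll Qx(\log x)^{O(1)}$ uniformly for $Q\leq x\leq Q^\sigma$. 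Combining, $\sum_{q\sim Q}|E(x;q,1)| \ll Q^{(1+\eta)/2}x^{1/2}(\log x)^{O(1)}$, and inserting this in $\mathcal{E}$ and integrating $x^{-1}(\log x)^{O(1)}$ over $[Q,Q^\sigma]$ produces the first error term $Q^{(\eta-1)/2}(\log Q)^{3/2}$.

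The second error term $Q^{(\sigma+2\eta)/4-1}(\log Q)^{1/3}$ dominates precisely when $\sigma>2$, where the top of the integration range extends well past $Q^2$ and the above Cauchy--Schwarz estimate becomes inefficient. In that regime I would split the integral at a threshold $T=Q^\tau$, use the argument just described for $x\leq T$, and for $x>T$ estimate the weighted sum $\sum_{n\equiv 1\bmod q}\Lambda(n)w(n)$ directly via Hypothesis 1.11$_\eta$ combined with a moment (or Mellin/Plancherel) bound that avoids squaring $|E|$. Optimizing $\tau$ as a function of $\sigma$ and $\eta$ yields the advertised exponent, and the two error terms balance exactly at $\sigma=4-2\eta$, which matches the support condition stated in the theorem.

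The main obstacle is keeping the log powers sharp throughout: verifying that the Barban--Davenport--Halberstam variance remains $Qx(\log x)^{O(1)}$ all the way up to $x=Q^\sigma$ (which genuinely needs GRH for $x\gg Q$), carefully interpolating at the threshold $T$ so that the exponent of $\log Q$ in the second error term comes out as $1/3$, and cleanly handling the $p\mid q$ cases and the $e\geq 2$ prime-power contributions (the latter carry an extra $p^{-e/2}\leq x^{-1/4}$ saving and fold easily into the error terms). Once these technicalities are in place, the main term $f(0)$ emerges whenever $\sigma<4-2\eta$.
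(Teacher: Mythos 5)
Your overall strategy---explicit formula, reduction to $\sum_q|E(x;q,1)|$, Cauchy--Schwarz in $q$, Hypothesis $1.11_\eta$, and a GRH-conditional variance bound---is exactly the paper's, and your derivation of the first error term $Q^{(\eta-1)/2}(\log Q)^{3/2}$ in the regime $Q\le x\le Q^2$ is correct and matches the paper.

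However, the treatment of the $x>Q^2$ regime has a genuine gap, and it rests on a factual misstatement. You assert, and list as a ``technicality to verify,'' that the Barban--Davenport--Halberstam variance ``remains $Qx(\log x)^{O(1)}$ all the way up to $x=Q^\sigma$.'' This is \emph{false}: the Goldston--Vaughan formula under GRH is
\[
\sum_{q\le Q}\sum_{(a,q)=1}|E(x;q,a)|^2 \;=\; Qx\log Q - cxQ + O\!\left(Q^2(x/Q)^{1/4+\epsilon}+x^{3/2}(\log x)^{5/2}(\log\log x)^2\right),
\]
and once $x>Q^2$ the $x^{3/2}(\log x)^{5/2}(\log\log x)^2$ term dominates $Qx\log Q$. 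If the variance really were $\ll Qx(\log x)^{O(1)}$ uniformly, your own calculation would already give the single error term $Q^{(\eta-1)/2}(\log Q)^{3/2}$ with \emph{no} restriction on $\sigma$ at all, contradicting the theorem. The restriction $\sigma<4-2\eta$ and the second error term $Q^{(\sigma+2\eta)/4-1}(\log Q)^{1/3}$ arise precisely because the $x^{3/2}$ term takes over past $x=Q^2$.

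The fix is simpler than what you propose. You do not need a separate ``moment or Mellin/Plancherel bound that avoids squaring $|E|$,'' nor an optimization of a threshold $\tau$: the split is at the fixed point $u=2$, forced by Goldston--Vaughan. For $2\le u\le\sigma$ one simply repeats the identical Cauchy--Schwarz $+$ Hypothesis $1.11_\eta$ argument, now inserting the bound $\sum_q\sum_a|E(Q^u;q,a)|^2\ll Q^{3u/2}(u\log Q)^{5/2}(\log\log Q^u)^2$. The integrand becomes $\ll Q^{\eta/2-1}Q^{u/4}(\log Q)^{1/4}\log\log Q$, which after integrating over $u\in[2,\sigma]$ yields $Q^{(\sigma+2\eta)/4-1}(\log Q)^{1/4}\log\log Q$; the exponent $1/3$ in the statement is just a clean upper bound for $(\log Q)^{1/4}\log\log Q$, not the result of any interpolation. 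As written, your proposal for $x>T$ does not lead to the stated bound: Hypothesis $1.11_\eta$ is intrinsically an $L^2$ statement in $q$, so one cannot ``avoid squaring $|E|$,'' and no concrete route from your moment-bound idea to the exponent $(\sigma+2\eta)/4-1$ is given.
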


The proof of Theorem \ref{thm 4-2eta} is given in \S\ref{sec:proofthm42eta}. It uses a result of Goldston and Vaughan \cite{GV}, which is an improvement of results of Barban, Davenport, Halberstam, Hooley, Montgomery and others.

\begin{remark}  In Theorem \ref{thm 4-2eta} we study the weighted 1-level density
\begin{equation} D_{1;Q/2,Q}(\eta) \ := \    \sum_{Q/2<q\leq Q}\frac 1{\phi(q)}\sum_{\chi \bmod q} \sum_{\gamma_{\chi}} \eta\left(
\gamma_{\chi} \frac{\log Q}{2\pi} \right), \end{equation} which is technically easier to study than the unweighted version
\begin{equation} D_{1;Q/2,Q}^{\rm unweighted}(\eta) \ := \ \frac 1{\frac 9{\pi^2}(Q/2)^2} \sum_{Q/2<q\leq Q}\sum_{\chi \bmod q} \sum_{\gamma_{\chi}} \eta\left(
\gamma_{\chi} \frac{\log Q}{2\pi} \right).
\end{equation}
This is similar to many other families of $L$-functions, such as cuspidal newforms \cite{ILS,MilMo} and Maass forms \cite{AAILMZ,AM}, where the introduction of weights (arising from the Petersson and Kuznetsov trace formulas) facilitates evaluating the arithmetical terms.
\end{remark}

Finally, we show how we can determine the 1-level density for arbitrary finite support, under a hypothesis of Montgomery \cite{Mon1}.

\begin{hypothesis}[Montgomery]
\label{montgomery original}
For any $a,q$ such that $(a,q)=1$ and $q\leq x$, we have
\be \psi(x;q,a)-\frac {\psi(x)}{\phi(q)}\ \ll_{\epsilon}\ x^{\epsilon} \left( \frac xq\right)^{1/2}.  \ee
\end{hypothesis}

%We give a heuristic in support of the above conjecture. Probabilistic arguments show that if $\{X_n\}_{n\geq 1}$ is a sequence of i.i.d random variables with $X_n = \pm 1$ with equal probability, that is if $\sum_{n\leq N} X_n$ is a random walk over the integers, then $$ \sum_{n\leq N}X_n \leq \sqrt N \log N$$ almost surely, that is with probability tending to $1$ as $N\rightarrow \infty$. Hence, if we believe that $\mu(qn+a)$ is an "almost random sequence" (where $a$ and $q$ are coprime positive integers), then we expect that $$ \sum_{n\leq N} \mu(qn+a) \ll \sqrt N (\log N)^A, $$ with $A>0$ a large enough real number, that is $$ \sum_{\substack{m\leq x \\ m\equiv a \bmod q} } \mu(m) \ll \sqrt{\frac xq} (\log (x/q))^A, $$ which suggests (and maybe implies) that $$ \psi(x;q,a)-\frac x{\phi(q)} \ll \sqrt{\frac xq} (\log x)^{A+1}. $$ As this last statement is false (see \cite{FriedlanderGranville}), Montgomery's conjecture is a weakened version of this inequality.

%We give a heuristic in support of the above conjecture. Probabilistic arguments suggest that $E(x,q,a)$ should be much
%smaller than the $O( x^{\foh+\epsilon})$ we obtain assuming GRH. We expect square-root cancelation in the sum, and we have $\phi(q)$ residue classes. Note (under just the Riemann Hypothesis) that $\sum_{a=1}^{q-1} E(x,q,a) =
%O(x^{\foh+\epsilon})$. If this error of size $x^{\foh + \epsilon}$ is
%spread among these $\phi(q)$ classes equally, we expect each
%$\psi(x,q,a)$ to be of size $\psi(x)/\phi(q)$ with errors
%of size $\sqrt{\frac{x}{\phi(q)}} \cdot x^\epsilon$, which is just the above conjecture.

It is by gaining some savings in $q$ in the error $E(x,q,a)$ that we can increase the support for families of Dirichlet $L$-functions. The following weaker version of Montgomery's Conjecture, which depends on a parameter $\theta \in (0, 1/2]$, also suffices to increase the support beyond $[-2, 2]$.

\begin{hypothesis}
\label{montgomery weaker}
%For any $a,q$ such that $(a,q)=1$ and $q\leq x$, we have
For any $a,q$ such that $(a,q)=1$ and $q\leq x$, we have
\be \psi(x;q,1)-\frac {\psi(x)}{\phi(q)}\ \ll_{\epsilon}\ \frac{x^{\frac 12+\epsilon}}{q^{\theta}} .  \ee
\end{hypothesis}

%In other words, any power savings in the modulus suffices.

% \begin{hyplog}
%\label{montgomery weakest}
%Fix $\epsilon>0$. There exists $\Psi(q)$ a function tending to infinity as $q\rightarrow\infty$ such that for any $a,q$ with $(a,q)=1$ and $x^{\epsilon}\leq q\leq \sqrt x$, we have
%$$ \psi(x;q,a)-\frac{\psi(x)}{\phi(q)} \ll_{\epsilon} \frac{x^{1/2}(\log x)^2}{(\log q)^2\Psi(q)}. $$
%\end{hyplog}

\begin{hypothesis}
\label{montgomery weakest}
Fix $\epsilon>0$. We have for $x^{\epsilon}\leq q\leq \sqrt x$ that
\begin{equation} \sum_{\substack{n\leq x \\ n\equiv 1\bmod q}} \Lambda(n)\left( 1-\frac nx\right)-\frac{1}{\phi(q)}\sum_{n\leq x}\Lambda(n)\left( 1-\frac nx\right) \ = \ o( x^{1/2}).
\label{small o equation}
\end{equation}
\end{hypothesis}

Note that this is a weighted version of $\psi(x;q,1)-\frac{\psi(x)}{\phi(q)}$; that is, we added the weight $\left( 1-\frac nx\right)$. The reason for this is that it makes the count smoother, and this makes it easier to analyze in general since the Mellin transform of $g(y):=1-y$ in the interval $[0,1]$ is decaying faster in vertical strips than that of $g(y)\equiv 1$.

Amongst the last three hypotheses, Hypothesis \ref{montgomery weakest} is the weakest, but it is still sufficient to derive the asymptotic in the $1$-level density for test functions with arbitrary large support.

\begin{theorem}\label{strongest thm} For $\eta$ whose Fourier Transform has arbitrarily large (but finite) support, we have the following:

\begin{enumerate}

\item  If we assume Hypothesis \ref{montgomery weakest}, then the 1-level density $D_{1;q}(\eta)$ equals $\f(0)+o(1)$, agreeing with the scaling limit of unitary matrices.
%where $f$ is a smooth function such that its Fourier transform has arbitrary large but finite support.

\item If we assume Hypothesis \ref{montgomery weaker} for some $0<\theta\leq \frac 12$, then $D_{1;q}(\eta)$ equals
\begin{equation}\f(0) \left(  1-\frac { \log(8\pi e^{\gamma})}{\log q}-\frac{\sum_{p\mid q}\frac{\log p}{p-1}}{\log q}\right)
+\int_0^{\infty}\frac{\f(0)-\f(t)}{q^{t/2}-q^{-t/2}} dt +O_{\epsilon}(q^{-\theta+\epsilon}).
\label{equation consequence hyp theta}
 \end{equation}
\end{enumerate}
\end{theorem}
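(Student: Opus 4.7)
The plan is to apply the explicit formula for Dirichlet $L$-functions and use character orthogonality to reduce $D_{1;q}(\widehat{f})$ to a single sum over prime powers $n \equiv 1 \bmod q$, which is precisely the arithmetic quantity that Hypothesis \ref{montgomery original} and its variants are designed to control. Invoking the explicit formula for each $\chi \bmod q$ and summing over characters via $\sum_\chi \chi(n) = \phi(q)\,\mathbf{1}_{n \equiv 1 \bmod q}$ (using that any $n \ge 2$ with $n \equiv 1 \bmod q$ automatically satisfies $(n,q) = 1$), the non-archimedean contribution from non-principal characters collapses to
\be -\frac{2}{\log q} \sum_{\substack{n \ge 2 \\ n \equiv 1 \bmod q}} \frac{\Lambda(n)}{\sqrt n}\, f\!\left(\frac{\log n}{\log q}\right), \ee
which by the support of $f$ is truncated at $n \le q^\sigma$. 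The archimedean Gamma-factor terms and the principal-character contribution combine, via the Euler-product manipulations carried out in \cite{GJMMNPP}, to yield the main terms $f(0)(1-\log(8\pi e^\gamma)/\log q-\sum_{p\mid q}(\log p/(p-1))/\log q)$ together with the integral $\int_0^\infty (f(0)-f(t))/(q^{t/2}-q^{-t/2})\,dt$ appearing in \eqref{equation consequence hyp theta}.

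For part (2), write $\psi(x;q,1) = \psi(x)/\phi(q) + E(x;q,1)$ in the Stieltjes form of the residual sum; the mean piece $\psi(x)/\phi(q)$ folds into the main terms identified above. Integrating by parts in
\be -\frac{2}{\log q} \int_{1^-}^{q^\sigma} \frac{f(\log x/\log q)}{\sqrt x}\,dE(x;q,1), \ee
and inserting the pointwise bound $|E(x;q,1)| \ll_\epsilon x^{1/2+\epsilon}/q^\theta$ from Hypothesis \ref{montgomery original}$_\theta$, the boundary term at $x = q^\sigma$ is $\ll q^{\sigma/2-\theta+\epsilon}/q^{\sigma/2} = q^{-\theta+\epsilon}$, while the remaining integral is $\ll \int_1^{q^\sigma} x^{-1+\epsilon}/q^\theta\,dx \ll q^{-\theta+\epsilon}$; combined with the prefactor $1/\log q$, this gives the stated error $O_\epsilon(q^{-\theta+\epsilon})$.

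For part (1) only the Ces\`aro-smoothed estimate $\int_0^x E(t;q,1)\,dt = o(x^{3/2})$, uniform for $x^\epsilon \le q \le \sqrt x$, is available, so the pointwise argument of part (2) does not apply. The plan is to perform two successive rounds of Abel summation in the error integral so that the antiderivative of $E(t;q,1)$ appears against a $C^1$ weight derived from $f(\log x/\log q)/\sqrt x$, and then to estimate dyadically over $x \in [q^{2+\delta}, q^\sigma]$; the complementary small-$x$ range is treated unconditionally using the GRH bound $E(x;q,1)\ll x^{1/2}(\log x)^2$, which contributes negligibly after the normalization by $(\log q)^{-1}$. Each dyadic block contributes $o(1)$, and summing over the $O(\log q)$ scales still yields $o(1)$, recovering $D_{1;q}(\widehat{f}) = f(0) + o(1)$. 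The main obstacle is transferring the particular Ces\`aro smoothing of Hypothesis \ref{montgomery original}* to the analytic weight $f(\log x/\log q)/\sqrt x$ uniformly across dyadic scales, so that the $o$-savings in each block do not accumulate into an $O(1)$ quantity; once this bookkeeping is handled the remaining estimation is routine.
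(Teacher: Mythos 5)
Your treatment of part (2) is sound and matches the paper's argument: one integration by parts against the Stieltjes measure $dE(x;q,1)$, followed by the pointwise bound $|E(x;q,1)|\ll_\epsilon x^{1/2+\epsilon}/q^\theta$, yields $O_\epsilon(q^{-\theta+\epsilon})$ just as in the paper (which works in the $u=\log x/\log q$ variable but is otherwise identical).

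Your sketch of part (1) has two concrete problems. First, you invoke ``the GRH bound $E(x,q,1)\ll x^{1/2}(\log x)^2$'' for the range $x$ between roughly $q$ and $q^{2}$ and call it unconditional. It is not: GRH is \emph{not} among the assumptions of this theorem, which is conditional only on the Montgomery-type hypothesis. Worse, even granting GRH, that bound is useless in this range: substituting $|E(q^u;q,1)|/q^{u/2}\ll (u\log q)^2$ into $T_4(q)=2\int_0^\sigma\left(\tfrac{f(u)}2-\tfrac{f'(u)}{\log q}\right)\tfrac{E(q^u;q,1)}{q^{u/2}}\,du$ produces a contribution of size $O((\log q)^2)$, not $o(1)$. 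The correct unconditional treatment of $1\le u\le 2$ is via Brun--Titchmarsh, exactly as carried out in the proof of Theorem \ref{thm millergoesal}, which yields $O(\log\log q/\log q)$.

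Second, for the main range $x\ge q^2$, your plan of two rounds of Abel summation followed by a dyadic decomposition in $x$ is both unnecessary and not brought to completion. Decomposing $[q^{2},q^\sigma]$ dyadically in $x$ creates $O(\log q)$ blocks; if each is $o(1)$ this only gives $o(\log q)$, and you explicitly leave the needed uniformity (``the main obstacle'') as an open bookkeeping issue. The paper sidesteps this entirely: Proposition \ref{proposition averaged explicit formula} already records that one further integration by parts rewrites $T_4(q)$ as an integral with $\psi_2(q^u;q,1)-\psi_2(q^u)/\phi(q)$ in the integrand. For $u\in[2,\sigma]$ one has $x=q^u\ge q^2$, so $q\le\sqrt x$, and taking $\epsilon=1/\sigma$ in Hypothesis \ref{montgomery original}* gives $\psi_2(q^u;q,1)-\psi_2(q^u)/\phi(q)=o(q^{u/2})$ \emph{uniformly} in $u$ on the compact interval $[2,\sigma]$ as $q\to\infty$. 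The integral $\int_2^\sigma(|f|+|f'|+|f''|)\,o(1)\,du$ is then $o(1)$ with no dyadic subdivision, and the single boundary term at $u=2$ is $o(1)$ by the same hypothesis at $x=q^2$. You should replace your GRH step and the dyadic machinery with these two observations.
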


\begin{remark}
Under GRH, the left hand side of \eqref{small o equation} is $O(x^{1/2}\log q)$. Therefore, if we win by more than a logarithm over GRH, then we have the expected asymptotic for the 1-level density for $\f$ of arbitrarily large finite support.

Interestingly, if we assume Montgomery's Conjecture (Hypothesis \ref{montgomery original}), then we can take $\theta=1/2$ in \eqref{equation consequence hyp theta}, and doing so we end up precisely with the Ratios Conjecture's prediction (see \eqref{eq:ratiosconjpred}).
%Finally, if instead of Hypothesis \ref{montgomery original}* or \ref{montgomery original}$_{\theta}$ we instead assume Montgomery's original conjecture, then the Ratio Conjecture's prediction is correct up to terms of size $q^{-1/2+\epsilon}$.
\end{remark}

We derive the explicit formula for the families of Dirichlet characters in \S\ref{sec:expformulaestimates}, as well as some useful estimates for some of the resulting sums. We give the unconditional results in \S\ref{section terms}, Theorems \ref{thm unconditional} and \ref{thm millergoesal}. The proofs of Theorems \ref{first thm} and \ref{thm 3/2} are conditional on GRH, and use results of \cite{FG} and \cite{fiorilli}; we give them in \S\ref{sec:proofthm32}. We conclude with an analysis of the consequences of the hypotheses on the distribution of primes in residue classes, using the de-averaging hypothesis to prove Theorem \ref{thm 4-2eta} in \S\ref{sec:proofthm42eta} and Montgomery's hypothesis to prove Theorem \ref{strongest thm} in \S\ref{sec:montstrong}.

\section{The Explicit Formula and Needed Sums}\label{sec:expformulaestimates}

Our starting point for investigating the behavior of low-lying zeros is the explicit formula, which relates sums over zeros to sums over primes. We follow the derivation in \cite{montgomery} (see also \cite{ILS,RS}, and \cite{Da,IK} for all needed results about Dirichlet $L$-functions). We first derive the expansion for Dirichlet characters with fixed conductor $q$, and then extend to $q \in (Q/2, Q]$. We conclude with some technical estimates that will be of use in proving Theorem \ref{first thm}. Here and throughout, we will set $f:=\f$. Note that $\eta$ is real and even, and thus so is the case for $f$, and moreover we have $\widehat f = \eta$.

\subsection{The Explicit Formula for fixed $q$}

\begin{proposition}[Explicit Formula for the Family of Dirichlet Characters Modulo $q$]
\label{proposition explicit formula}
Let $f$ be an even, twice differentiable test function with compact support. Denote the non-trivial zeros of $L(s,\chi)$ by $\rho_\chi=1/2+i\gamma_\chi$. Then the 1-level density $D_{1,q}(\widehat{f})$ equals
\begin{align}
\frac 1{\phi(q)}&\sum_{\chi \bmod q} \sum_{\gamma_{\chi}} \widehat{f}\left(
\gamma_{\chi} \frac{\log Q}{2\pi} \right)\ =\  \frac{f(0)}{\log Q} \left(  \log q -\log(8\pi e^{\gamma})-\sum_{p\mid q}\frac{\log p}{p-1}\right)\notag \\
&+\int_0^{\infty}\frac{f(0)-f(t)}{Q^{t/2}-Q^{-t/2}} dt -\frac 2{\log Q} \sum_{\substack{p^{\nu}\parallel q \\ p^e \equiv 1 \bmod q/p^{\nu} \\ e, \nu \geq 1}}\frac{\log p}{\phi(p^{\nu})p^{e/2}}f\left( \frac{\log p^{e}}{\log Q}\right)\notag
% &+\frac{1}{\log
% Q} \int_{-\infty}^{\infty} \hat{\psi}(\tau) \sum_{\chi\neq\chi_0}
% \left[ \frac{\Gamma'}{\Gamma} \left(
% 1/4+a(\chi)/2+\frac{\pi i \tau}{\log Q} \right) \right] d\tau \notag
\\ &-\frac{2}{\log Q} \left(\sum_{n\equiv 1 \bmod q}-\frac 1{\phi(q)}\sum_{n} \right)\frac{\Lambda(n)}{n^{1/2}} f\left(
\frac{\log n}{\log Q}\right)+O\left(\frac{1}{\phi(q)}\right).
\label{explicit formula}
\end{align}
\end{proposition}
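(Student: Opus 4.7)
\emph{Proof proposal.} The plan is to apply the standard explicit formula to each $L(s,\chi)$ separately and then average over all $\chi \bmod q$, using orthogonality $\sum_{\chi}\chi(n) = \phi(q)\mathbf{1}_{n\equiv 1\,(q)}$ to collapse the prime sum. Following Montgomery \cite{montgomery} (see also \cite{ILS,RS,Da,IK}), one begins for each character $\chi$ with the contour integral
$$\frac{1}{2\pi i}\int_{(2)}\frac{L'(s,\chi)}{L(s,\chi)}\,\hat{f}\!\left(\frac{(s-\tfrac12)\log Q}{2\pi i}\right)\,ds,$$
which on the line $\Re s = 2$ expands, via the Dirichlet series $-L'/L(s,\chi)=\sum_n\Lambda(n)\chi(n) n^{-s}$ and inverse Mellin inversion of $\hat f$, as $-\frac{1}{\log Q}\sum_n \Lambda(n)\chi(n) n^{-1/2} f(\log n/\log Q)$. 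Shifting the contour to $\Re s = -1$ picks up (i) the non-trivial zeros of $L(s,\chi)$, (ii) any trivial zeros in the strip $0\le \Re s \le 1$, and (iii), only when $\chi=\chi_{0}$, the pole at $s=1$ inherited from $\zeta(s)$. The functional equation of $L(s,\chi)$ (which on the logarithmic derivative expresses $L'/L(s,\chi)$ in terms of $L'/L(1-s,\overline\chi)$ plus the digamma $\Gamma'/\Gamma((s+\mathfrak{a})/2)$ and the conductor factor $\log(q^{*}_\chi/\pi)$) converts the integral along $\Re s = -1$ back into a prime sum, producing the per-character explicit formula.

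Averaging this identity over $\chi \bmod q$ and invoking orthogonality converts the main prime sum into $\sum_{n\equiv 1\,(q)}\Lambda(n) n^{-1/2} f(\log n/\log Q)$. The $\chi_0$ pole contribution, after dividing by $\phi(q)$ and re-expressing via the explicit formula for $\zeta(s)$ (which converts the pole residue $\Phi(1)+\Phi(0)$, with $\Phi(s)=\hat f((s-\tfrac12)\log Q/(2\pi i))$, into $\frac{2}{\log Q}\sum_n \Lambda(n) n^{-1/2} f(\log n/\log Q)$ plus bounded terms), produces the de-biasing correction
$$-\frac{2}{\log Q}\left(\sum_{n\equiv 1\,(q)} -\frac{1}{\phi(q)}\sum_n\right)\frac{\Lambda(n)}{\sqrt n}\,f\!\left(\frac{\log n}{\log Q}\right)$$
in the statement (the remaining bounded terms being absorbed in the $O(1/\phi(q))$ error). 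The averaged digamma contribution, after expanding $\frac{\Gamma'}{\Gamma}((s+\mathfrak{a})/2) = -\gamma -\log 2 + \sum_{k\ge 0}\bigl(\tfrac{1}{k+1}-\tfrac{1}{k+(s+\mathfrak{a})/2}\bigr)$ and symmetrising over the even/odd parity of $\chi$ (i.e., $\mathfrak{a}\in\{0,1\}$), rearranges (after a change of variable $t = 2\log n/\log Q$) into $\int_0^\infty (f(0)-f(t))/(Q^{t/2}-Q^{-t/2})\,dt$ together with the constant $-f(0)\log(8\pi e^\gamma)/\log Q$.

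The remaining two terms arise from the distinction between $\chi \bmod q$ and its primitive inducer $\chi^*$ of conductor $q^*_\chi$. The Euler-factor identity $L(s,\chi) = L(s,\chi^*)\prod_{p\mid q,\,p\nmid q^*_\chi}(1-\chi^*(p)p^{-s})$ contributes, after averaging: on the conductor side, the term $-\frac{f(0)}{\log Q}\sum_{p\mid q}\frac{\log p}{p-1}$, via the standard identity $\frac{1}{\phi(q)}\sum_{\chi\bmod q}\log q^*_\chi = \log q - \sum_{p\mid q}\log p/(p-1)$; and on the prime side, the explicit finite sum over $p^\nu\parallel q$ with $p^e\equiv 1\bmod q/p^\nu$ appearing in the statement, where the factor $1/\phi(p^\nu)$ counts the proportion of characters mod $q$ whose $p$-part of the conductor is trivial. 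The residual $O(1/\phi(q))$ absorbs the trivial-zero contributions and boundary effects from the contour shift at height $\pm T$.

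The main obstacle will be the careful bookkeeping: (a) combining the even/odd parity of characters in the digamma contribution so that the single constant $\log(8\pi e^\gamma)$ emerges, (b) tracking the primitive-versus-imprimitive passage consistently on both conductor and prime sides to produce the matched $\sum_{p\mid q}\log p/(p-1)$ and $\sum_{p^\nu\parallel q}$ terms, and (c) re-expressing the $\chi_0$ pole residues in the clean de-biasing form via the explicit formula for $\zeta$. Montgomery's account in \cite{montgomery} provides the template for this final assembly.
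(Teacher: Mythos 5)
Your proposal follows essentially the same strategy as the paper's proof: start from Weil's explicit formula for each $L(s,\chi)$ (the paper invokes Theorem~12.13 of Montgomery--Vaughan directly with $F(x)=\frac{2\pi}{\log Q}f(\frac{2\pi x}{\log Q})$, where you re-derive it by contour shifting), average over $\chi\bmod q$, use orthogonality to collapse the prime side to $n\equiv 1\bmod q$, treat the digamma term via the duplication/series expansion, and track the primitive-vs-imprimitive distinction to get both the conductor average $\frac1{\phi(q)}\sum_\chi\log(q^*_\chi/\pi)$ and the finite $\sum_{p^\nu\parallel q}$ prime sum. The one genuine organizational difference is how the principal character is handled. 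You include $\chi_0$ in the average and then re-express the pole residue $\Phi(1)+\Phi(0)$, via the explicit formula for $\zeta$, as $\frac{2}{\log Q}\sum_n\frac{\Lambda(n)}{\sqrt n}f(\tfrac{\log n}{\log Q})+O(1)$, so that the $-\frac1{\phi(q)}\sum_n$ correction emerges after dividing by $\phi(q)$. The paper instead sums only over $\chi\neq\chi_0$, reads the de-biasing correction directly from the orthogonality identity $\sum_{\chi\neq\chi_0}\chi(n)=\phi(q)\mathbf{1}_{n\equiv1(q)}-\chi_0(n)$, and then adds back the $\chi_0$ zeros (which coincide with those of $\zeta$ and contribute $O(1)$ before normalization) as part of the $O(1/\phi(q))$ error. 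The two are algebraically equivalent, but the paper's route is slightly cleaner: it sidesteps the need to justify the re-expression of $\Phi(1)+\Phi(0)$, which would otherwise require some care because $L(s,\chi_0)=\zeta(s)\prod_{p\mid q}(1-p^{-s})$ has additional trivial zeros on $\Re s=0$ that would enter a direct contour argument for $\chi_0$. Also, a small slip in your write-up: in the digamma contribution the change of variable should involve the continuous integration variable from the $\Gamma'/\Gamma$ integral representation (the paper uses $t=2\pi x/\log Q$), not $\log n$; the sum over $n$ does not appear there.
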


\begin{proof}
We start with Weil's explicit formula for $L(s,\chi)$, with $\chi\bmod q$ a non-principal character (we add the contribution from the principal character later). We can replace $L(s,\chi)$ by $L(s,\chi^*)$ (where $\chi^*$ is the primitive character of conductor $q^*$ inducing $\chi$), since these have the same non-trivial zeros.   Taking $F(x):=\frac{2\pi}{\log Q}f\left(\frac {2\pi x}{\log Q}\right)$ in Theorem 12.13 of \cite{montgomery} (whose conditions are satisfied by our restrictions on $f$), we find $\Phi(s)=\widehat{f}\left( \frac{\log Q}{2\pi} \frac{(s-\frac 12)}i\right)$, and
\begin{eqnarray}\label{eq:firstrhosum} & & \sum_{\rho_{\chi}} \widehat{f}\left( \frac{\log Q}{2\pi} \gamma_{\chi}\right)\ =\ \frac {f(0)}{\log Q}\left( \log(q^*/\pi)+\frac{\Gamma'}{\Gamma}\left(\frac 14+\frac {a(\chi)}2\right)\right) \nonumber\\ & &
-\frac 2{\log Q} \sum_{n=1}^{\infty} \frac{\Lambda(n) \Re(\chi^*(n))}{n^{1/2}}f\left( \frac{\log n}{\log Q}\right)
+\frac{4\pi}{\log Q}\int_0^{\infty} \frac{e^{-(1+2a(\chi))\pi x}}{1-e^{-4\pi x}} \left(f(0)-f\left(\frac{2\pi x}{\log Q}\right)\right)dx, \nonumber\\
\end{eqnarray} where $a(\chi) = 0$ for the half of the characters with $\chi(-1) = 1$ and $1$ for the half with $\chi(-1) = -1$. Making the substitution $t=\frac{2\pi x}{\log Q}$ in the integral and summing over $\chi\neq \chi_0$, we find
\begin{eqnarray}\label{eq:secondrhosum}& & \sum_{\chi \neq \chi_0} \sum_{\gamma_{\chi}} \widehat{f}\left(
\gamma_{\chi} \frac{\log Q}{2\pi} \right) \  = \  \frac {f(0)}{\log Q}\left( \sum_{\chi\neq \chi_0} \log(q^*/\pi)+\frac{\phi(q)}{2}\frac{\Gamma'}{\Gamma}\left(\frac 34\right)+\frac{\phi(q)}{2}\frac{\Gamma'}{\Gamma}\left(\frac 14\right)\right) \nonumber\\
& & \ \ \ \ \ + \  \phi(q)\int_0^{\infty}\frac{Q^{-3t/2}+Q^{-t/2}}{1-Q^{-2t}}(f(0)-f(t))dt \nonumber\\
& & \ \ \ \ \ - \ \frac 2{\log Q} \left(\phi(q)\sum_{n\equiv 1 \bmod q} - \sum_n \right) \frac{\Lambda(n)}{n^{1/2}}f\left( \frac{\log n}{\log Q}\right) \nonumber\\
& & \ \ \ \ \  - \ \frac 2{\log Q} \sum_{\chi \neq \chi_0} \sum_{n} \frac{\Lambda(n) \Re(\chi^*(n)-\chi(n))}{n^{1/2}}f\left( \frac{\log n}{\log Q}\right) +O\left(1\right).
\end{eqnarray} To get \eqref{eq:secondrhosum} from \eqref{eq:firstrhosum} we added zero by writing $\chi^*(n)$ as $\left(\chi^*(n) - \chi(n)\right) + \chi(n)$. Summing $\chi(n)$ over all $\chi \bmod q$ gives $\phi(q)$ if $n \equiv 1 \bmod q$ and $0$ otherwise; as our sum omits the principal character, the sum of $\chi(n)$ over the non-principal characters yields the sum on the third line above. We also replaced $(\phi(q)-1)/2$ by $\phi(q)/2$ in the first term, hence the $O(1)$.

We use Proposition 3.3 of \cite{FiMa} for the first term (which involves the sum over the conductor of the inducing character). We then use the duplication formula of the digamma function $\psi(z) = \Gamma'(z)/\Gamma(z)$ to simplify the next two terms, namely $\psi(1/4) + \psi(3/4)$. As $\psi(1/2) = -\gamma-2\ln 2$ (equation 6.3.3 of \cite{AS}) and $\psi(2z) = \frac12\psi(z) + \frac12\psi(z+\frac12)+\ln 2$ (equation 6.3.8 of \cite{AS}), setting $z=1/4$ yields $\psi(1/4)+\psi(3/4) = -2\gamma -6\ln 2$. We keep the next two terms as they are, and then apply Proposition 3.4 of \cite{FiMa} (with $r=1$) for the last term, obtaining that it equals
\be -\frac{2}{\log Q} \sum_n \frac{\Lambda(n)}{n^{1/2}} f\left(\frac{\log n}{\log Q}\right) {\rm Re}\left(    \sum_{\chi\neq \chi_0}   \left(\chi^*(n)-\chi(n)\right)\right). \ee
Writing $n=p^e$, this term is zero unless $p\mid q$. If $p\mid q$, then it is zero unless $p^e\equiv 1 \bmod
q/p^\nu$, where $\nu\geq 1$ is the largest $\nu$ such that $p^\nu\mid q$. Therefore this term equals
\be -\frac{2}{\log Q} \sum_p \sum_{\substack{e, \nu \ge 1 \\ p^\nu \parallel q, p^e \equiv 1 \bmod q/p^\nu}} \frac{\Lambda(p^e)}{
\phi(p^\nu)p^{e/2}} f\left(\frac{\log p^e}{\log Q}\right).\ee Combining the above and some elementary algebra yields
\begin{eqnarray} & & \frac 1{\phi(q)}\sum_{\chi \neq \chi_0} \sum_{\gamma_{\chi}} \widehat{f}\left(
\gamma_{\chi} \frac{\log Q}{2\pi} \right) \ = \ \frac{f(0)}{\log Q} \left(  \log q -\log(8\pi e^{\gamma})-\sum_{p\mid q}\frac{\log p}{p-1}\right) \nonumber\\
& & \ \ \ \ \ + \ \int_0^{\infty}\frac{f(0)-f(t)}{Q^{t/2}-Q^{-t/2}} dt  - \ \frac 2{\log Q} \left(\sum_{n\equiv 1 \bmod q} - \frac 1{\phi(q)}\sum_n \right) \frac{\Lambda(n)}{n^{1/2}}f\left( \frac{\log n}{\log Q}\right) \nonumber\\
& & \ \ \ \ \ - \ \frac 2{\log Q} \sum_{\substack{p^{\nu}\parallel q \\ p^e \equiv 1 \bmod q/p^{\nu} \\ e, \nu\geq 1}}\frac{\log p}{\phi(p^{\nu})p^{e/2}}f\left( \frac{\log p^{e}}{\log Q}\right)   +O\left(\frac 1{\phi(q)}\right).
\label{almost there functional equation}
\end{eqnarray}
%we obtain the right hand side of \eqref{explicit formula}.
Finally, since the non-trivial zeros of $L(s,\chi_0)$ coincide with those of $\zeta(s)$, the difference between the left hand side of \eqref{explicit formula} and that of \eqref{almost there functional equation} is
\be \frac 1{\phi(q)} \sum_{\gamma_\zeta} \widehat{f}\left(
\gamma_\zeta \frac{\log Q}{2\pi} \right)\ \ll\ \frac 1{\phi(q)}\ee
(since $f$ is twice continuously differentiable, $\widehat{f}(y) \ll 1/y^2$),
% Now, since the lowest non-trivial zero of $L(s,\chi_0)$ is $\rho\approx\frac 12+14.13i$, we have for $Q$ large enough (depending on the support of $f$) that the left hand side of \eqref{almost there functional equation} and that of \eqref{explicit formula} are equal.
%, and to remove the prime powers from the last sum, which as was seen in \cite{GJMMNPP} , is $O((\log Q)^2)$.
completing the proof.\footnote{While the explicit formula for $\zeta(s)$ has a term arising from its pole at $s=1$, that term does not matter here as it is insignificant upon division by the family's size.}
\end{proof}

\subsection{The Averaged Explicit Formula for $q \in (Q/2, Q]$}

We now average the explicit formula for $D_{1;q}(\widehat{f})$ (Proposition \ref{proposition explicit formula}) over $q \in (Q/2,Q]$. We concentrate on deriving useful expansions, which we then analyze in later sections when we determine the allowable support.

\begin{proposition}[Explicit Formula for the Averaged Family of Dirichlet Characters Modulo $q$]\label{proposition averaged explicit formula} The averaged 1-level density, $D_{1;Q/2,Q}(\widehat{f})$, equals \bea\label{eq:D1q2qave} & & D_{1;Q/2,Q}(\widehat{f}) \ = \
\frac1{Q/2} \sum_{Q/2 < q \le Q} D_{1;q}(\widehat{f}) \nonumber\\ && = \ \frac{f(0)}{\log Q} \left( \log Q -1-\gamma-\log(4\pi) -\sum_p \frac{\log p}{p(p-1)}\right)  + \ \int_0^{\infty}\frac{f(0)-f(t)}{Q^{t/2}-Q^{-t/2}} dt \nonumber\\ & & \ \ \ + \ \frac{2}{Q/2} \sum_{Q/2 < q \le Q} \int_0^{\infty} \left(\frac {f(u)}2 -\frac {f'(u)}{\log Q}\right)\frac{\psi(Q^u;q,1)-\frac{\psi(Q^u)}{\phi(q)}}{Q^{u/2}}\ du + O\left(\frac1{Q}\right). \eea
Setting \be\label{eq:psi2def} \psi_2(x;q,a) \ :=\ \sum_{\substack{n\leq x\\n\equiv a \bmod q}} \Lambda(n)\left(1-\frac nx\right), \hspace{1cm} \psi_2(x) \ :=\ \sum_{\substack{n\leq x}} \Lambda(n)\left(1-\frac nx\right), \ee the last integral in \eqref{eq:D1q2qave} may be replaced with
\be -2 \int_0^{\infty} \left(\frac {3f(u)}4 -\frac {2f'(u)}{\log Q} +\frac {f''(u)}{(\log Q)^2}\right)\frac{\psi_2(Q^u;q,1)-\frac{\psi_2(Q^u)}{\phi(q)}}{Q^{u/2}}\ du.
\ee
\end{proposition}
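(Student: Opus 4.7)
The strategy is to average the explicit formula of Proposition~\ref{proposition explicit formula} termwise over $q\in(Q/2,Q]$ and then to perform two successive partial summations: one to convert the arithmetic sum over primes $n\equiv 1\bmod q$ into an integral against $\psi(Q^u;q,1)-\psi(Q^u)/\phi(q)$, and a second to pass to the smoother counting function $\psi_2$. Nothing beyond what is already in Proposition~\ref{proposition explicit formula} is used arithmetically; everything is bookkeeping plus two integrations by parts.

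The $q$-dependent constants in the main term are handled by standard averages. Stirling's formula gives $\tfrac{1}{Q/2}\sum_{Q/2<q\leq Q}\log q=\log Q-1+\log 2+O((\log Q)/Q)$, and combining $+\log 2$ with $-\log(8\pi e^{\gamma})=-3\log 2-\log\pi-\gamma$ collapses to $-\log(4\pi)-\gamma$, producing the first displayed bracket of \eqref{eq:D1q2qave}. Swapping the order of summation in $\sum_{Q/2<q\leq Q}\sum_{p\mid q}\tfrac{\log p}{p-1}$ and using $\#\{Q/2<q\leq Q:p\mid q\}=Q/(2p)+O(1)$ gives the averaged tail $\sum_p\tfrac{\log p}{p(p-1)}$ up to a negligible remainder. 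The double sum over prime-power divisors $p^{\nu}\parallel q$ with $p^e\equiv 1\bmod q/p^{\nu}$ is small for each individual $q$ and, after averaging, absorbs into the error. Finally, the $O(1/\phi(q))$ term averages to $O(1/Q)$ since $\sum_{Q/2<q\leq Q}\phi(q)^{-1}=O(1)$ by the standard asymptotic for $\sum_{q\leq N}\phi(q)^{-1}$.

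The analytic core is the first partial summation. Writing $g(x):=f(\log x/\log Q)\,x^{-1/2}$ and $E(x,q,1):=\psi(x;q,1)-\psi(x)/\phi(q)$, the arithmetic sum in Proposition~\ref{proposition explicit formula} is $-\tfrac{2}{\log Q}\int_0^{\infty}g(x)\,dE(x,q,1)$. Since $f$ has compact support in $(0,\infty)$ and $E(x,q,1)=0$ for small $x$, Stieltjes integration by parts produces no boundary term and yields $\tfrac{2}{\log Q}\int_0^{\infty}g'(x)E(x,q,1)\,dx$. Computing
\[
g'(x) \;=\; \frac{f'(\log x/\log Q)}{x^{3/2}\log Q} - \frac{f(\log x/\log Q)}{2 x^{3/2}},
\]
and then substituting $x=Q^u$ with $dx=Q^u\log Q\,du$, produces the asserted integral of $\bigl(\tfrac{f(u)}{2}-\tfrac{f'(u)}{\log Q}\bigr)E(Q^u,q,1)/Q^{u/2}$ against $du$, up to an overall constant $\pm 2$; averaging over $Q/2<q\leq Q$ gives the third line of \eqref{eq:D1q2qave}.

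For the $\psi_2$ reformulation I use the smoothing identity $\psi(x)=\bigl(x\psi_2(x)\bigr)'$, which is immediate from $x\psi_2(x)=\int_0^x\psi(t)\,dt$; applied separately to $\psi(\cdot;q,1)$ and $\psi(\cdot)/\phi(q)$ it gives $E(x,q,1)=\bigl(xE_2(x,q,1)\bigr)'$, where $E_2:=\psi_2(\cdot;q,1)-\psi_2(\cdot)/\phi(q)$. Substituting into the $\psi$-form integral and integrating by parts once more in $x$ (boundary terms vanish: $f$ is compactly supported, and $xE_2(x,q,1)\to 0$ as $x\to 0^+$ since $\psi_2(x)\ll x$) transfers the derivative onto the prefactor $x^{-3/2}\bigl(\tfrac{f(\log x/\log Q)}{2}-\tfrac{f'(\log x/\log Q)}{\log Q}\bigr)$. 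Differentiating this prefactor gives
\[
-x^{-5/2}\left(\frac{3f(\log x/\log Q)}{4}-\frac{2f'(\log x/\log Q)}{\log Q}+\frac{f''(\log x/\log Q)}{(\log Q)^2}\right),
\]
which after the substitution $x=Q^u$ is exactly the combination claimed in the statement. The main technical hurdle throughout is simply the careful chain-rule bookkeeping through two nested integrations by parts and the Jacobian from $x=Q^u$; no new arithmetic input is required.
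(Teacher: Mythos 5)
Your proof takes essentially the same route as the paper's: average each term of Proposition~\ref{proposition explicit formula} over $q\in(Q/2,Q]$ (elementary estimates for $T_1$, $T_2$, the $O(1/\phi(q))$ term), perform one Stieltjes integration by parts to reach the $\psi$-form, and a second one via $E(x,q,1)=(x\,E_2(x,q,1))'$ to reach the $\psi_2$-form. The $q$-averages of the constant terms and your two chain-rule computations of the prefactor derivatives are all correct.

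However, there is a genuine gap at the phrase ``up to an overall constant $\pm 2$.'' A proof must determine the sign, and here the sign is not cosmetic. Carrying out the first integration by parts carefully, with $w(t)=t^{-1/2}f(\log t/\log Q)$ and $E(t)=\psi(t;q,1)-\psi(t)/\phi(q)$, one has $w'(t)=-t^{-3/2}\bigl(\tfrac12 f-\tfrac{f'}{\log Q}\bigr)$, so $\int w\,dE=-\int E\,w'\,dt=+\int t^{-3/2}\bigl(\tfrac12 f-\tfrac{f'}{\log Q}\bigr)E\,dt$, and therefore
\[
T_4(q)\;=\;-\frac{2}{\log Q}\int_1^\infty t^{-3/2}\Bigl(\tfrac12 f\bigl(\tfrac{\log t}{\log Q}\bigr)-\tfrac{1}{\log Q}f'\bigl(\tfrac{\log t}{\log Q}\bigr)\Bigr)E(t,q,1)\,dt
\;=\;-2\int_0^\infty\Bigl(\tfrac{f(u)}2-\tfrac{f'(u)}{\log Q}\Bigr)\frac{E(Q^u,q,1)}{Q^{u/2}}\,du,
\]
\emph{not} $+2\int\cdots$. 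You can sanity-check this directly: for $\operatorname{supp} f\subset[-1,1]$ the sum over $n\equiv 1\bmod q$ is empty, so $T_4(q)=\frac{2}{\phi(q)\log q}\sum_n\Lambda(n)n^{-1/2}f(\log n/\log q)>0$ for $f\ge0$, while $E(Q^u,q,1)=-\psi(Q^u)/\phi(q)<0$ there, forcing the coefficient to be $-2$. Your second integration by parts preserves the overall sign (as your own computation of $h'(x)=-x^{-5/2}(\tfrac34 f-\tfrac{2f'}{\log Q}+\tfrac{f''}{(\log Q)^2})$ shows), so the $\psi$-form and the $\psi_2$-form must carry the \emph{same} coefficient. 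The displayed statement gives $+2$ for the first and $-2$ for the second, which cannot both be right; the consistent value is $-2$ in both, and the ``$\pm2$'' hedge is precisely where that inconsistency should have surfaced. As written, you cannot legitimately claim the $\psi_2$ line ``is exactly the combination claimed'' after having left the sign of the previous line undetermined. One smaller imprecision: $f$ does not have compact support in $(0,\infty)$ (it is even with $f(0)$ generically nonzero); the vanishing of the boundary term at $t=1$ comes from $E(1,q,1)=E_2(1,q,1)=0$, which you do also mention.
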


\begin{proof}

The main term in the expansion of $D_{1;q}(\widehat{f})$ from Proposition \ref{proposition explicit formula} is \be T_1(q) \ := \ \frac{f(0)}{\log Q} \left(  \log q -\log(8\pi e^{\gamma})-\sum_{p\mid q}\frac{\log p}{p-1}\right).\ee
Using the anti-derivative of $\log x$ is $x \log x - x$, one easily finds its average over $Q/2<q\leq Q$ is
\be \frac 1{Q/2} \sum_{Q<q\leq 2Q} T_1(q)\ =\ \frac{f(0)}{\log Q} \left( \log Q -1-\gamma-\log(4\pi) -\sum_p \frac{\log p}{p(p-1)}\right)+O\left( \frac 1{Q} \right).\ee

We now turn to the lower-order term
\be T_2(q)\ := \ -\frac 2{\log Q} \sum_{\substack{p^{\nu}\parallel q \\ p^e \equiv 1 \bmod q/p^{\nu} \\ e, \nu \geq 1}}\frac{\log p}{\phi(p^{\nu})p^{e/2}}f\left( \frac{\log p^{e}}{\log Q}\right).\ee Before determining its average behavior, we note that its size can vary greatly with $q$. It is very small for prime $q$ (so $\nu=1$ and $p=q$ in the sum), since
\be T_2(q)\  \ll\ \frac 1{\log Q} \sum_{e\geq 1} \frac{\log q}{\phi(q)q^{e/2}}\ \ll\ \frac 1{(q-1)(q^{\frac 12}-1)}; \ee
however, it can be as large as $\frac C{\sqrt q \log Q}$ for other values of $q$ (such as $q=2(2^e-1)$). This is more or less as large as it can get, since for general $q$ we have
\be T_2(q) \ \ll\ \frac 1{\log Q} \sum_{\substack{p^{\nu}\parallel q \\ e, \nu \geq 1 \\ p^e \leq Q^{\sigma}}}\frac{\log p}{\phi(p^{\nu})(q/p^{\nu})^{1/2}}\ \ll\ \frac{(\log q)^{\frac 12}}{q^{\frac 12}\log\log q}. \label{bound on T2}\ee
On average, however, $T_2(q)$ is very small:
\begin{eqnarray} \frac 1{Q/2} \sum_{Q/2<q\leq Q} T_2(q) & \ \ll\ & \frac 1Q\sum_{Q/2<q\leq Q} \sum_{\substack{p^{\nu}\parallel q \\ p^e \equiv 1 \bmod q/p^{\nu} \\ e, \nu \geq 1}}\frac{\log p}{p^{\nu+e/2}}\ \ll\  \frac 1Q \sum_{\substack{p^{\nu} \\ \nu, e \geq 1}} \frac{\log p}{p^{\nu+e/2}} \sum_{\substack{q\leq Q \\ p^{\nu}\mid q \\ \frac{q}{p^{\nu}}\mid p^e-1}} 1 \nonumber\\
& \ll& \frac 1Q \sum_{\substack{p^{\nu} \\ \nu, e\geq 1}} \frac{\log p}{p^{\nu+e/2}} \tau(p^e-1)\ \ll_{\epsilon}\ \frac 1Q \sum_{\substack{p^{\nu} \\ \nu, e\geq 1}} \frac{\log p}{p^{\nu+(1-\epsilon) \frac e2}} \nonumber\\
&\ll& \frac 1Q \sum_{p} \frac{\log p}{p^{\frac 32-\frac{\epsilon}2}} \ \ll\ \frac 1Q.
\label{eq:average of T_2 is small}
 \end{eqnarray}

While we will not re-write the next lower order term, it is instructive to determine its size. Set
\be T_3(q) \ :=\ \int_0^{\infty}\frac{f(0)-f(t)}{Q^{t/2}-Q^{-t/2}} dt. \ee Letting $t = 2\pi x / \log Q$, we find
\be\label{eq:T3qwithexp} T_3(q) \ = \ \frac{2\pi}{\log Q}\int_0^{\infty} \frac{f(0)-f\left(\frac{2\pi x}{\log Q}\right)}{2\sinh(\pi x)}dx.\ee Since $f$ is twice differentiable with compact support, $|f(0)-f(x)|\ll |x|$, thus \be T_3(q) \ \ll \ \frac{2\pi}{\log Q} \int_0^\infty \frac{x}{2\sinh(\pi x)}\ dx \ = \ \frac{\pi}{4\log Q}.\ee
As \be \int_0^\infty \frac{x^k dx}{\sinh(\pi x)} \ = \ \frac{2^{k+1}-1}{2^k \pi^{k+1}} \Gamma(k+1)\zeta(k+1),\ee
%\begin{eqnarray}
%\int_0^\infty \frac{x^k}{\sinh(\pi x)} dx &\ = \ & \pi^{-k-1} \int_0^\infty \frac{x^k}{\sinh(x)} dx \nonumber\\ &=& \frac{2}{\pi^{k+1}}\int_0^\infty \frac{x^k}{e^x(1 - e^{-2x})}dx \nonumber\\ &=& \frac{2}{\pi^{k+1}}\int_0^\infty x^k e^{-x} \sum_{n=0}^\infty e^{-2nx} dx \nonumber\\ &=& \frac{2}{\pi^{k+1}}\sum_{n=0}^\infty \int_0^\infty x^k e^{-(2n+1)x} dx \nonumber\\ &=& \frac{2}{\pi^{k+1}}\sum_{n=0}^\infty \frac1{(2n+1)^k} \int_0^\infty x^{k+1-1} e^{-x} dx \nonumber\\ &=& \frac{2}{\pi^{k+1}} \Gamma(k+1) \sum_{n=0}^\infty \frac1{(2n+1)^k} \nonumber\\ &=& \frac{2}{\pi^{k+1}}  \Gamma(k+1) \left[ \sum_{n=1}^\infty \frac1{n^{k+1}} - \sum_{n=1}^\infty \frac1{(2n)^{k+1}}\right] \nonumber\\ &=& \frac{2}{\pi^{k+1}}  \Gamma(k+1) \left[\zeta(k+1) - \frac1{2^{k+1}} \zeta(k+1)\right] \nonumber\\ &=& \frac{2^{k+1}-1}{2^k \pi^{k+1}} \Gamma(k+1)\zeta(k+1),
%\end{eqnarray}
if $f$ has a Taylor series of order $K+1$ we have \be T_3(q) \ = \ \sum_{k=1}^K \frac{(2^{k+1}-1) \zeta(k+1)f^{(k)}(0)}{\log^{k+1} Q} + O\left(\frac{1}{\log^{K+1} Q}\right).\ee
%If $f$ is analytic, then we can also expand $f(x)$ in its Taylor series about $x=0$:
%$$ f(x) = \sum_{k=0}^Kb_kx^k+R_K(x),$$
%and substitute in the integral. Doing so, we get that
%integrate each summand separately, giving the formula
%$$ T_3=\sum_{k=1}^K \frac{a_k}{(\log Q)^k} + E_K,$$
%with $$a_k=-2(2\pi)^kb_k\int_0^{\infty} x^k\frac{e^{-\pi x}+e^{-3\pi x}}{1-e^{-4\pi x}}dx.$$ The error is bounded by
%$$ E_K \ll \frac{1}{\log Q}\int_0^{\infty} \frac{e^{-\pi x}+e^{-3\pi x}}{1-e^{-4\pi x}} R_K(x) dx \ll_K \frac 1{(\log %Q)^{K+1}},$$
%since $R_K(x)$ has a zero of order $K+1$ at $x=0$.
%
If the Taylor coefficients of $f$ decay very fast, we can even make our bounds uniform and get an error term smaller than a negative power of $Q$.

% \begin{align*} T_3 &= 2\int_0^{\log Q}\frac{Q^{-3t/2}+Q^{-t/2}}{1-Q^{-2t}}(f(0)-f(t))dt +2\int_{\log Q}^{\infty}\frac{Q^{-3t/2}+Q^{-t/2}}{1-Q^{-2t}}(f(0)-f(t))dt \\
% & \ll \int_0^{\log Q}\frac{1}{t\log Q}(f(0)-f(t))dt + \int_{\log Q}^{\infty}\frac{Q^{-\frac 12 t}}{1-e^{-2}}(f(0)-f(t))dt \\
% & \ll \frac 1{\log Q},
% \end{align*}
% so $T_3\ll \frac 1{\log Q}$ (we used this in the proof of Proposition \ref{proposition explicit formula}).

The remaining term from Proposition \ref{proposition explicit formula} is the most important, and controls the allowable support. The arithmetic lives here, as this term involves primes in arithmetic progressions. It is
\begin{eqnarray}\label{penultimate prime sum} T_4(q) & \ := \ &  -\frac{2}{\log Q} \left(\sum_{n\equiv 1 \bmod q}-\frac 1{\phi(q)}\sum_{n} \right)\frac{\Lambda(n)}{n^{1/2}} f\left(
\frac{\log n}{\log Q}\right) \nonumber\\
&=&  -\frac{2}{\log Q} \int_1^{\infty} t^{-\frac 12} f\left( \frac{\log t}{\log Q}\right) d\left(\psi(t;q,1)-\frac{\psi(t)}{\phi(q)}\right) \notag \\
&=& \frac{2}{\log Q} \int_1^{\infty} \frac{\frac 12 f\left( \frac{\log t}{\log Q}\right)-\frac 1{\log Q}f'\left( \frac{\log t}{\log Q}\right) }{t^{\frac 32}}\left(\psi(t;q,1)-\frac{\psi(t)}{\phi(q)}\right) dt. \end{eqnarray}
The claim in the proposition follows by changing variables by setting $t = Q^u$; specifically, the final integral is
%&=  2 \int_0^{\infty} \frac{\frac 12 f(u)-\frac 1{\log Q}f'(u) }{Q^{u/2}}\left(\psi(Q^u;q,a)-\frac{\psi(Q^u)}{\phi(q)}\right) du\\
\begin{eqnarray}\label{prime sum term} T_4(q)
&\ = \ &2 \int_0^{\infty} \left(\frac {f(u)}2 -\frac {f'(u)}{\log Q}\right)\frac{\psi(Q^u;q,1)-\frac{\psi(Q^u)}{\phi(q)}}{Q^{u/2}}\ du.
\end{eqnarray}

We give an alternative expansion for the final integral. This expansion involves a smoothed sum of $\Lambda(n)$, which will be technically easier to analyze when we turn to determining the allowable support under Montgomery's hypothesis (Theorem \ref{strongest thm}(1)). Recall \be \psi_2(x;q,a)\ :=\ \sum_{\substack{n\leq x\\n\equiv a \bmod q}} \Lambda(n)\left(1-\frac nx\right), \hspace{1cm} \psi_2(x) \ :=\ \sum_{\substack{n\leq x}} \Lambda(n)\left(1-\frac nx\right). \ee We integrate by parts in \eqref{penultimate prime sum}. Since \begin{eqnarray}
& &  \int_1^{x} \left(\psi(t;q,1)-\frac{\psi(t)}{\phi(q)}\right) dt \ = \  \int_1^{x} \left(\sum_{\substack{n\leq t \\ n\equiv 1 \bmod q}} \Lambda(n) - \frac 1{\phi(q)} \sum_{n\leq t} \Lambda(n)\right) dt \nonumber\\
& & \ \ \ \ \ = \ \sum_{\substack{n\leq x \\ n\equiv 1 \bmod q}}\Lambda(n) \int_n^x dt - \frac 1{\phi(q)} \sum_{n\leq x} \Lambda(n)\int_n^x dt \nonumber\\
& & \ \ \ \ \ = \  x\left( \sum_{\substack{n\leq x \\ n\equiv 1 \bmod q}}\Lambda(n) \left(1-\frac nx \right) - \frac 1{\phi(q)} \sum_{n\leq x} \Lambda(n)\left(1-\frac nx \right) \right),
\end{eqnarray}
we find \begin{equation}T_4(q) \ = \ -2 \int_0^{\infty} \left(\frac {3f(u)}4 -\frac {2f'(u)}{\log Q} +\frac {f''(u)}{(\log Q)^2}\right)\frac{\psi_2(Q^u;q,1)-\frac{\psi_2(Q^u)}{\phi(q)}}{Q^{u/2}}\ du,
\end{equation}
completing the proof.
\end{proof}

\begin{rek} It will be convenient later that in the averaged case $\psi$ and $\psi_2$ are both evaluated at $(Q^u;q,1)$ and not $(q^u;q,1)$; this is because we are rescaling all $L$-function zeros by the same quantity (a global rescaling instead of a local rescaling).  \end{rek}

\subsection{Technical Estimates}

In the proof of Theorem \ref{thm 3/2}, we need the following estimation of a weighted sum of the reciprocal of the totient function.
\begin{lemma}
\label{sum reciprocal totient lemma}
Let $\phi$ be Euler's totient function. We have
\begin{equation}  \sum_{r\leq R} \frac 1{\phi(r)}\left( R^{1/2} + \frac r{R^{1/2}} -2r^{1/2} \right)=D_1R^{1/2} \log R + D_2 R^{1/2} + D_3 + O\left(\frac{\log R}{R^{1/2}}\right),
%+ \frac{D_5}{R^{1/2}}+O_{\epsilon}\left( \frac 1{R^{1-\epsilon}} \right),
\label{sum reciprocal totient equation}
\end{equation}
where
$$ D_1 \ :=\  \frac{\zeta(2)\zeta(3)}{\zeta(6)}, \hspace{2cm}  D_2\ :=\ D_1 \left( \gamma-3-\sum_p \frac{\log p}{p^2-p+1} \right), $$
\be D_3\ :=\ -2 \zeta\left(\frac 12\right)\prod_p \left(1+\frac 1{(p-1)p^{1/2}} \right).\ee

% D_5 & := & 3+\gamma+\log 2\pi +\sum_p\frac{\log p}{p(p-1)}.\eea
More generally, if $P(u):=\sum_{i=0}^d a_i u^i$ is a polynomial of degree $d$ and of norm \be \Vert P\Vert \ :=\ \max_{i} |a_i|,\ee then
%taking values halfway between endpoints at discontinuities, then
\begin{multline}  \sum_{r\leq R} \frac 1{\phi(r)} \int_{\frac{\log r}{\log R}} ^1 P(u) \left(R^{\frac u2} -\frac r{R^{\frac u2}} \right)du
%- \frac 12 \int_{0} ^1 P(u) \left(R^{\frac u2} -\frac 1{R^{\frac u2}} \right)du
\ = \ E_1 \log R  \int_{-\infty}^1 R^{\frac u2}uP(u)du \\+ E_2 \int_{-\infty}^1 R^{\frac u2}P(u)du
+ \sum_{j=1}^{d+1} \frac{F_j(P)}{(\log R)^j}  +O_d(R^{-\frac 12}\Vert P\Vert)
%+ R^{-\frac 12} \sum_{j=0}^{n+1} \frac{G_j}{(\log R)^j}+O_{\epsilon}\left( \frac 1{R^{1-\epsilon}} \right),
\label{sum reciprocal totient equation general}
\end{multline}
where
\begin{equation}
E_1 \ := \  \frac{\zeta(2)\zeta(3)}{\zeta(6)}, \hspace{1cm} E_2  \ := \ E_1 \left( \gamma-1-\sum_p \frac{\log p}{p^2-p+1} \right),
\label{equation definition E_i}
\end{equation}
and the $F_j(P)$ are constants depending on $P$ which can be computed explicitly.
% [We need to compute them, since we are taking $P(\kappa u)$.].
For example,
\bea F_1(P) & = &  -4\zeta\left( \frac 12\right) \prod_p \left( 1+\frac 1{(p-1)p^{1/2}}\right) \sum_{i=0}^d (-1)^i P^{(i)}(1) \nonumber\\
 F_2(P) &=&  -4\zeta\left( \frac 12\right) \prod_p \left( 1+\frac 1{(p-1)p^{1/2}}\right) \left( \frac{\zeta'}{\zeta}\left( \frac 12 \right)-\sum_p \frac{\log p}{(p-1)p^{1/2} +1} \right) \sum_{i=1}^d (-1)^i P^{(i)}(1). \nonumber\\ \eea
Finally,
\begin{eqnarray} & &  \sum_{r\leq R} \frac 1{\phi(r)} \int_{\frac{\log (r/2)}{\log (R/2)}}^1 P(u) \left((R/2)^{\frac u2} -\frac r{2(R/2)^{\frac u2}} \right)du
%- \frac 12 \int_{0} ^1 P(u) \left(R^{\frac u2} -\frac 1{R^{\frac u2}} \right)du
\nonumber\\ & &  = \ E_1 \log (R/2)  \int_{-\infty}^1 (R/2)^{\frac u2}uP(u)du
+ (E_2+E_1\log 2) \int_{-\infty}^1 (R/2)^{\frac u2}P(u)du \nonumber\\
& & \ \ \ \ +\ \sum_{j=1}^{d+1} \frac{F_j^{(2)}(P)}{(\log (R/2))^j}  +O_d(R^{-\frac 12}\Vert P\Vert),
%+ R^{-\frac %12} \sum_{j=0}^{n+1} \frac{G_j}{(\log (R/2))^j}+O_{\epsilon}\left( \frac 1{R^{1-\epsilon}} \right).
\label{sum reciprocal totient equation general 2}
\end{eqnarray}
where the first two constants are given by
\bea F_1^{(2)}(P) & \ :=  \ & \frac{F_1(P)}{\sqrt 2} \nonumber\\
F_2^{(2)}(P) &:= & -2\sqrt 2\zeta\left( \frac 12\right) \prod_p \left( 1+\frac 1{(p-1)p^{1/2}}\right)\nonumber\\
& & \ \ \ \ \  \times\ \left( \frac{\zeta'}{\zeta}\left( \frac 12 \right)-\sum_p \frac{\log p}{(p-1)p^{1/2} +1}+\log 2 \right) \sum_{i=1}^d (-1)^i P^{(i)}(1).
\eea

% and the $F_j^{(2)}$ and $G_j^{(2)}$ are constants depending on $P$ which can be computed explicitly. For example,
% $$ F_1^{(2)}=  -2\sqrt 2\zeta\left( \frac 12\right) \prod_p \left( 1+\frac 1{(p-1)p^{1/2}}\right) \sum_{i=0}^n (-1)^i P^{(i)}(1).$$

% Moreover, if we assume Lindelöf's hypothesis, we have for any integer $A\geq0$ that
% \begin{multline}  \sum_{r\leq R} \frac 1{\phi(r)}\left( R^{1/2} + \frac r{R^{1/2}} -2\left(\frac{\log r}{\log R} \right)^{A+1}r^{1/2} \right) \\
% =D_1R^{1/2} \log R + D_2 R^{1/2} + \sum_{i=0}^{A+1} \frac{F_i(A)}{(\log R)^i} + \frac{\log R}{R^{1/2}} + \frac{D_5}{R^{1/2}}+O_{\epsilon}\left( \frac 1{R^{1-\epsilon}} \right),
% \label{sum reciprocal totient equation general}
% \end{multline}
%  where the $F_i(A)$ are constants which can be computed explicitly. For $A=0$ we have
% $$ F_0(0)=-\zeta\left(\frac 12\right) \prod_p \left( 1+\frac 1{(p-1)p^{1/2}} \right), \hspace{0.5cm} F_1(0)= F_0(0) \left( \frac{\zeta'}{\zeta}\left(\frac 12\right)-\sum_p \frac{\log p}{(p-1)p^{1/2}+1}\right). $$
\end{lemma}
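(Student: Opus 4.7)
The plan is to prove all three identities by combining the hyperbola (convolution) method with a Mellin--Barnes residue analysis, based on the Dirichlet series
\[
G(s)\;:=\;\sum_{r\ge 1}\frac{1}{\phi(r)r^{s}}\;=\;\zeta(s+1)\prod_{p}\!\left(1+\frac{p^{-s-1}}{p-1}\right),
\]
which has a simple pole at $s=0$ of residue $E_{1}=\zeta(2)\zeta(3)/\zeta(6)=\prod_{p}(1+\tfrac{1}{p(p-1)})$, the regular value $G(-\tfrac{1}{2})=\zeta(\tfrac{1}{2})\prod_{p}(1+\tfrac{1}{(p-1)p^{1/2}})=:\zeta(\tfrac{1}{2})D$, and logarithmic derivatives at $s=0$ and $s=-\tfrac{1}{2}$ equal to $\gamma-\sum_{p}\log p/(p^{2}-p+1)$ and $\zeta'(\tfrac{1}{2})/\zeta(\tfrac{1}{2})-\sum_{p}\log p/((p-1)p^{1/2}+1)$ respectively. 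For \eqref{sum reciprocal totient equation} I would give a self-contained hyperbola-method proof: using $1/\phi(r)=(1/r)\sum_{d\mid r}\mu^{2}(d)/\phi(d)$ and interchanging orders, the inner sum $\sum_{m\le R/d}(R^{1/2}+dmR^{-1/2}-2(dm)^{1/2})/m$ is evaluated via the elementary asymptotics $\sum_{m\le Y}1/m=\log Y+\gamma+O(1/Y)$, $\sum_{m\le Y}1=Y+O(1)$, and $\sum_{m\le Y}m^{-1/2}=2Y^{1/2}+\zeta(\tfrac{1}{2})+O(Y^{-1/2})$. The three constants $\gamma$, $1$, $-4$ combine to $\gamma-3$, and summing over $d$ using $\sum_{d}\mu^{2}(d)/(d\phi(d))=E_{1}$, the log-derivative identity for the Euler product of $G$, and $\sum_{d}\mu^{2}(d)/(d^{1/2}\phi(d))=D$ produces $D_{1},D_{2},D_{3}$ with remainder $O(\log R/R^{1/2})$.

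For \eqref{sum reciprocal totient equation general}, I would extend $w_{P}(r):=\int_{\log r/\log R}^{1}P(u)(R^{u/2}-rR^{-u/2})\,du$ to $r\in(0,R]$ (zero for $r>R$) and compute its Mellin transform
\[
\tilde w_{P}(s)\;=\;\frac{1}{s(s+1)}\int_{-\infty}^{1}P(u)R^{u(s+1/2)}du\;=\;\frac{R^{s+1/2}}{s(s+1)}\sum_{i=0}^{d}\frac{(-1)^{i}P^{(i)}(1)}{(s+1/2)^{i+1}(\log R)^{i+1}},
\]
the first equality by swapping the $r$- and $u$-integrations and evaluating the inner $r$-integral, and the second by $d+1$ rounds of integration by parts in $u$. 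Mellin--Barnes then gives $\sum_{r\ge 1}w_{P}(r)/\phi(r)=\frac{1}{2\pi i}\int_{(c)}\tilde w_{P}(s)G(s)\,ds$ for $c>0$, and I would shift the contour past the poles at $s=0$, $s=-\tfrac{1}{2}$, and $s=-1$. The double pole at $s=0$ (from $1/s$ in $\tilde w_{P}$ and the pole of $G$) contributes precisely the main terms $E_{1}\log R\int_{-\infty}^{1}uP(u)R^{u/2}du+E_{2}\int_{-\infty}^{1}P(u)R^{u/2}du$; the pole of order $d+1$ at $s=-\tfrac{1}{2}$ (inherited from $\tilde w_{P}$) contributes $\sum_{j=1}^{d+1}F_{j}(P)/(\log R)^{j}$ via Leibniz applied to $R^{s+1/2}G(s)/(s(s+1))$, with the leading Laurent coefficients producing the stated explicit $F_{1}(P)$ and $F_{2}(P)$; and the residual contour on $\mathrm{Re}(s)=-\tfrac{1}{2}-\delta$ is bounded by $O_{d}(R^{-1/2}\|P\|)$ using standard polynomial bounds on $|\zeta(1+s)|$ on vertical lines together with the decay of the exponential $R^{u(s+1/2)}$. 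The shifted identity \eqref{sum reciprocal totient equation general 2} follows by running the same Mellin calculation with $R\mapsto R/2$ and the extra factor $1/2$ inside the $r$-term, which injects the additional $\log 2$ contributions into $E_{2}+E_{1}\log 2$ and into the $F_{j}^{(2)}$.

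The main technical obstacle is the residue at $s=-\tfrac{1}{2}$ in \eqref{sum reciprocal totient equation general}: the pole of order $d+1$ combined with the Taylor expansions of $R^{s+1/2}$, $1/(s(s+1))$, and $G(s)$ produces many cross-terms, and isolating the stated compact forms of $F_{1}(P)$ and $F_{2}(P)$ requires careful bookkeeping via Leibniz's rule to collect the coefficients of each $(\log R)^{-j}$.
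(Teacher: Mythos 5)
Your proposal is correct in substance and, for \eqref{sum reciprocal totient equation general} and \eqref{sum reciprocal totient equation general 2}, follows essentially the same Mellin--Barnes route as the paper: compute $\tilde{w}_{P}(s)=\alpha(s)$ by swapping the $r$- and $u$-integrals, extend it meromorphically by $d+1$ integrations by parts in $u$ to obtain the Laurent structure at $s=-\tfrac12$, and shift the contour to collect residues at $s=0,-\tfrac12,-1$. The one place where your approach genuinely differs is in the base identity \eqref{sum reciprocal totient equation}: the paper also handles it by Mellin inversion (writing the left side as $\frac{1}{2\pi i}\int_{(c)} Z(s)\,\frac{R^{s+1/2}}{2s(s+\frac12)(s+1)}\,ds$ and shifting), whereas you instead unfold $1/\phi(r)=(1/r)\sum_{d\mid r}\mu^{2}(d)/\phi(d)$ and evaluate the inner sums by elementary partial summation. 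Your hyperbola argument is perfectly valid and more self-contained (no contour estimates needed), and the bookkeeping you sketch does reproduce $D_1,D_2,D_3$, including the $\gamma-3$ coefficient from $\gamma+1-4$ and $D_3=-2\zeta(\tfrac12)\prod_p(1+\tfrac1{(p-1)p^{1/2}})$ via $\sum_d\mu^2(d)/(d^{1/2}\phi(d))$; the paper's choice has the advantage of being uniform with the treatment of the polynomial generalization.

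Two small technical points you should repair if you write this up. First, the Euler product factorization you quote, $G(s)=\zeta(s+1)\prod_p\bigl(1+\tfrac{p^{-s-1}}{p-1}\bigr)$, is correct but the residual product only converges absolutely for $\Re s>-1$, so it does not by itself exhibit (let alone let you cross) the pole of $G$ at $s=-1$; you need the further factorization $G(s)=\zeta(s+1)\zeta(s+2)Z_{2}(s)$ with $Z_{2}$ holomorphic and bounded on $\Re s>-\tfrac32$, as in the paper. Second, you say both that you ``shift past the poles at $s=0,-\tfrac12,-1$'' and that ``the residual contour is on $\Re(s)=-\tfrac12-\delta$''; these are inconsistent, and a contour at $\Re(s)=-\tfrac12-\delta$ only yields a remainder $O_{d}(R^{-\delta}\Vert P\Vert)$, not the claimed $O_{d}(R^{-1/2}\Vert P\Vert)$. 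To get $R^{-1/2}$ you must move to $\Re(s)\le -1-\delta$ (the paper goes to $-\tfrac32+\epsilon$), pick up the double pole at $s=-1$ (whose residue is $R^{-1/2}\sum_{j\ge 0}G_{j}(P)/(\log R)^{j}=O(R^{-1/2}\Vert P\Vert)$ because of the $(\log R)^{-(i+1)}$ factors in $\alpha(s)$), and then bound the residual integral using polynomial growth of $\zeta$ on vertical lines together with the $|t|^{-3}$ decay of $\alpha$.
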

\begin{remark}  It is possible to improve the estimates in \eqref{sum reciprocal totient equation}, \eqref{sum reciprocal totient equation general} and \eqref{sum reciprocal totient equation general 2} to ones with an error term of $O_{\epsilon,d}(R^{-5/4+\epsilon}\Vert P\Vert)$; however, this is not needed for our purposes.
\end{remark}

\begin{proof}
 By Mellin inversion, for $c\geq 2$ the left hand side of \eqref{sum reciprocal totient equation} equals
\begin{eqnarray} \frac 1{2\pi i} \int_{\Re (s)=c} Z(s) \left( \frac{R^{s+\frac 12}}{s}+\frac{R^{s+\frac 12}}{s+1}-2\frac{R^{s+\frac 12}}{s+\frac 12} \right)  ds  \ = \ \frac 1{2\pi i} \int_{\Re (s)=c} Z(s) \frac{R^{s+\frac 12}}{2s(s+\frac 12)(s+1)} ds, \nonumber\\
\label{integral to compute with residues}
\end{eqnarray}
where \be Z(s) \ :=\ \sum_{n\geq 1} \frac 1{n^s \phi(n)}.\ee
Taking Euler products,
\be Z(s) \ = \ \zeta(s+1)\zeta(s+2)Z_2(s),\ee
where
\be Z_2(s) \ :=\ \prod_p \left( 1+\frac 1{p(p-1)}\left(\frac 1{p^{s+1}}-\frac 1{p^{2s+2}} \right)\right),\ee
which converges for $\Re (s)>-\frac 32$. We shift the contour of integration to the left to the line $\Re (s)=-\frac 32+\epsilon$. By a standard residue calculation, we get that \eqref{integral to compute with residues} equals
 \be D_1R^{1/2} \log R + D_2 R^{1/2} + D_3 + D_4\frac{\log R}{R^{1/2}} + \frac{D_5}{R^{1/2}}+\frac 1{2\pi i} \int_{\Re (s)=-\frac 32+\epsilon} Z(s) \frac{R^{s+\frac 12}}{2s(s+\frac 12)(s+1)} ds\ee
for some constants $D_4$ and $D_5$. The proof now follows from standard bounds on the zeta function, which show that this integral is $\ll_{\epsilon} R^{-1+\epsilon}$. See the proof of Lemma 6.9 of \cite{fiorilli} for more details.

We now move to \eqref{sum reciprocal totient equation general}. The Mellin transform in this case is (for $\Re (s)>0$)
% \begin{align*} \alpha(s)&:=\int_1^R r^{s-1}\int_{\frac{\log r}{\log R}} ^1 h(u) \left(R^{\frac u2} -\frac r{R^{\frac u2}} \right) du dr \\
% &= \int_{0}^1 h(u) \int_1^{R^u } r^{s-1} \left(R^{\frac u2} -\frac r{R^{\frac u2}} \right) dr du \\
% &=\int_{0}^1 h(u) \left(\frac{R^{u(s+\frac 12)}}{s(s+1)}-\frac{R^{\frac u2}}s + \frac{R^{-\frac u2}}{s+1} \right) du \\
% &= \frac 1{s(s+1)} \int_{0}^1 h(u) R^{u(s+\frac 12)}\ du  -\frac {1}s \int_{0}^1 R^{\frac u2} h(u) du + \frac 1{s+1} \int_{0}^1 R^{-\frac u2}h(u) du.
% \end{align*}
\begin{eqnarray} \alpha(s)& \ := \ & \int_0^R r^{s-1}\int_{\frac{\log r}{\log R}} ^1 P(u) \left(R^{\frac u2} -\frac r{R^{\frac u2}} \right) du dr \nonumber\\
&=& \int_{-\infty}^1 P(u) \int_0^{R^u } r^{s-1} \left(R^{\frac u2} -\frac r{R^{\frac u2}} \right) dr du \nonumber\\
&=& \int_{-\infty}^1 P(u) \frac{R^{u(s+\frac 12)}}{s(s+1)}\ du,
\end{eqnarray}
which is now defined for $\Re(s)>-1/2$. To meromorphically extend $\alpha(s)$ to the whole complex plane, we integrate by parts $n$ times:
\begin{equation} \alpha(s)\ =\ \frac{R^{s+\frac 12}}{s(s+1)}\sum_{i=0}^{n}  \frac {(-1)^i P^{(i)}(1)}{(s+\frac 12)^{i+1} (\log R)^{i+1}},
\label{analytic continuation of the mellin transform}
\end{equation}
which is a meromorphic function with poles at the points $s=0,-1/2, -1$. The integral we need to compute is
\be \frac 1{2\pi i} \int_{\Re (s)=1} Z(s) \alpha(s) ds. \ee
We remark that
\be \alpha(-3/2 +\epsilon+it)\ \ll_{\epsilon,d}\ \frac {R^{- 1+\epsilon}}{t^3}\Vert P\Vert,\ee
hence the proof is similar as in the previous case, since by shifting the contour of integration to the left, we have
\be \frac 1{2\pi i} \int_{\Re (s)=1} Z(s) \alpha(s) ds\ =\ A+O_{\epsilon,d}(R^{-1+\epsilon}\Vert P\Vert), \ee
where $A$ is the sum of the residues of $Z(s) \alpha(s)$ for $-3/2+\epsilon \leq \Re (s) \leq 2$. Note that if $\beta(s):=s(s+1)\alpha(s)$, then
\be \beta(0) \ = \ \int_{-\infty}^1 R^{\frac u2}P(u)du, \hspace{2cm} \beta'(0)\ =\ \log R \int_{-\infty}^1 R^{\frac u2}uP(u)du, \ee
so the residue at $s=0$ equals
\be \frac{\zeta(2)\zeta(3)}{\zeta(6)}\beta(0) \left( \frac{\beta'}{\beta}(0)+\gamma-1- \sum_p\frac{\log p}{p^2-p+1}  \right).  \ee
For the pole at $s=-1/2$, we need to use the analytic continuation of $\alpha(s)$ provided in \eqref{analytic continuation of the mellin transform}, which shows that this residue equals
\be \sum_{j=1}^{n+1} \frac{F_j(P)}{(\log R)^j},\ee
where the $F_j(P)$ are constants depending on $P$ which can be computed explicitly. For example,
\bea F_1(P) & =&  -4\zeta\left( \frac 12\right) \prod_p \left( 1+\frac 1{(p-1)p^{1/2}}\right) \sum_{i=0}^d (-1)^i P^{(i)}(1) \nonumber\\  F_2(P) &=&  -4\zeta\left( \frac 12\right) \prod_p \left( 1+\frac 1{(p-1)p^{1/2}}\right) \left( \frac{\zeta'}{\zeta}\left( \frac 12 \right)-\sum_p \frac{\log p}{(p-1)p^{1/2} +1} \right) \sum_{i=1}^d (-1)^i P^{(i)}(1).\nonumber\\ \eea
Moreover, $F_i(P) \ll_d \Vert P\Vert$ for all $i$.

At $s=-1$, we have a double pole with residue
\be R^{-\frac 12} \sum_{j=0}^{n+1} \frac{G_j(P)}{(\log R)^j}, \ee
for some constants $G_j(P)\ll_d \Vert P \Vert$, hence the the proof of \eqref{sum reciprocal totient equation general} is complete.

For the proof of \eqref{sum reciprocal totient equation general 2}, we proceed in the same way, noting that the Mellin transform is
\begin{eqnarray} \alpha_2(s) \ =\  \frac{2^s}{s(s+1)}\int_{-\infty}^1 P(u) (R/2)^{u(s+\frac 12)}du.
\end{eqnarray}

\end{proof}

%%%%%%%%%%%%%%%%%%%%%%%%%%%%%%%%%%%%%%%%%%%%%%%%%%%%%%%%%%%%%%%%%%%%%%%%%%%%%%%%%%%%%%%%%%%%%%%%%%%%%%%%%%%%%%%%%%%%%%%%%%%%%%
%%%%%%%%%%%%%%%%%%%%%%%%%%%%%%%%%%%%%%%%%%%%%%%%%%%%%%%%%%%%%%%%%%%%%%%%%%%%%%%%%%%%%%%%%%%%%%%%%%%%%%%%%%%%%%%%%%%%%%%%%%%%%%
%%%%%%%%%%%%%%%%%%%%%%%%%%%%%%%%%%%%%%%%%%%%%%%%%%%%%%%%%%%%%%%%%%%%%%%%%%%%%%%%%%%%%%%%%%%%%%%%%%%%%%%%%%%%
%%%%%%%%%%%%%%%%%%%%%%%%%%%%%%%%%%%%%%%%%%%%%%%%%%%%%%%%%%%%%%%%%%%%%%%%%%%%%%%%%%%%%%%%%%%%%%%%%%%%%%%%%%%%
%%%%%%%%%%%%%%%%%%%%%%%%%%%%%%%%%%%%%%%%%%%%%%%%%%%%%%%%%%%%%%%%%%%%%%%%%%%%%%%%%%%%%%%%%%%%%%%%%%%%%%%%%%%%%%%%%%%%%%%%%%%%%%
%%%%%%%%%%%%%%%%%%%%%%%%%%%%%%%%%%%%%%%%%%%%%%%%%%%%%%%%%%%%%%%%%%%%%%%%%%%%%%%%%%%%%%%%%%%%%%%%%%%%%%%%%%%%%%%%%%%%%%%%%%%%%%
\section{Unconditional Results (Theorems \ref{thm unconditional} and \ref{thm millergoesal})}\label{section terms}

Using the expansion for the 1-level density $D_{1,q}(\widehat{f})$ and the averaged 1-level density $D_{1;Q/2,Q}(\widehat{f})$ from Propositions \ref{proposition explicit formula} and \ref{proposition averaged explicit formula}, we prove our unconditional results.

\begin{proof}[Proof of Theorem \ref{thm unconditional}]
We start from Proposition \ref{proposition explicit formula}. The only term of \eqref{explicit formula} we need to understand is the last one (the ``prime sum''), which is given by
\be T_4(q) \ := \ 2 \int_0^{1} \left(\frac {f(u)}2 -\frac {f'(u)}{\log q}\right)\frac{\psi(q^u;q,1)-\frac{\psi(q^u)}{\phi(q)}}{q^{u/2}}\ du.\ee
(We used that the support of $f$ is contained in $[-1,1]$ and we made the substitution $t=q^u$.) However, since there are no integers congruent to $1\bmod q$ in the interval $[2,q^u]$ when $u\leq 1$ (this is also true when $q^u$ is replaced by $Q^u$, with $Q/2<q\leq Q$), the $\psi(q^u;q,1)$ term equals zero. By the Prime Number Theorem there is an absolute, computable constant $c>0$ such that
\begin{eqnarray} T_4(q) & \ = \ &  -2 \int_0^{1} \left(\frac {f(u)}2 -\frac {f'(u)}{\log q}\right)\frac{\psi(q^u)}{q^{u/2}\phi(q)}\ du \nonumber\\ &= & -\frac 2{\phi(q)} \int_0^{1} q^{u/2}\left(\frac {f(u)}2 -\frac {f'(u)}{\log q}\right) du
+O\left( \frac 1{\phi(q)} \int_0^{\sigma} \frac{q^{u/2}}{e^{c\sqrt{u\log q}}}\ du\right),
\end{eqnarray}
and the error term is \be \ll\ \frac{q^{\sigma/4}}{\phi(q)}\int_0^{\sigma/2} e^{-c\sqrt{u\log q}}du+\frac{e^{-c\sqrt{\frac{\sigma}2 \log q}}}{\phi(q)}\int_{\sigma/2}^{\sigma} q^{u/2}du\ \ll\ \frac{q^{\sigma/2-1}}{e^{c'\sqrt{\sigma\log q}}}
\ee
for $q$ large enough (in terms of $\sigma$), completing the proof.
\end{proof}

\begin{proof}[Proof of Theorem \ref{thm millergoesal}]
Starting again from Proposition \ref{explicit formula}, we have that
\be -\frac 2{\log Q} \sum_{\substack{p^{\nu}\parallel q \\ p^e \equiv 1 \bmod q/p^{\nu} \\ e, \nu \geq 1}}\frac{\log p}{\phi(p^{\nu})p^{e/2}}f\left( \frac{\log p^{e}}{\log Q}\right)\ \ll\ \frac{(\log q)^{\frac 12}}{q^{\frac 12}\log\log q} \ee
(see \eqref{bound on T2}), hence this goes in the error term and the only term we need to worry about is the last one.

As our support exceeds $[-1, 1]$, the $\psi(q^u;q,1)$ no longer trivially vanishes, and the last term is
\be T_4(q) \ = \  2 \int_0^{2} \left(\frac {f(u)}2 -\frac {f'(u)}{\log q}\right)\frac{\psi(q^u;q,1)-\frac{\psi(q^u)}{\phi(q)}}{q^{u/2}}\ du.\ee In the proof of Theorem \ref{thm unconditional} above we showed that the contribution from the integral where $0\leq u\leq 1$ is $O(q^{-1/2})$.

For any fixed $\epsilon > 0$, trivial bounds for the region $1\leq u \leq 1+\epsilon$ yield a contribution that is
\be \ll\ \int_1^{1+\epsilon} (u\log q) q^{\frac u2-1}du \ \ll\ q^{-\frac 12+\epsilon}.\ee

We use the Brun-Titchmarsh Theorem (see \cite{thelargesieve}) for the region where $1+\epsilon \leq u \leq 2$, which asserts that for $q<x$, \be \pi(x;q,a) \ \leq\ \frac{2 x}{\phi(q)\log(x/q)}.\ee We first bound the contribution from prime powers as follows. First there are at most $2e^{\omega(q)}$ residue classes $b\bmod q$ such that $b^e\equiv 1 \bmod q$, and so using that $\omega(q) \ll \log q/\log\log q$ we compute
\begin{eqnarray} \sum_{e\geq 2} \sum_{\substack{p\leq x^{1/e} \\ p^e \equiv 1 \bmod q}} \log p & \ \ll\ & \sum_{2\leq e \leq \frac 2{\epsilon}} e^{\omega(q)} \max_{b \bmod q}\Bigg(\sum_{\substack{p\leq x^{1/e} \\ p\equiv b \bmod q}}\log p \Bigg)+ \sum_{\frac 2{\epsilon} \leq e\leq 2\log x} \sum_{p\leq x^{1/e}}\log p \nonumber\\ & \ll & \sum_{2\leq e\leq \frac 2{\epsilon}} e^{\omega(q)} \left( 1+\frac{x^{1/e}}q\right) \log x + \sum_{\frac 2{\epsilon} \leq e\leq 2\log x} x^{1/e}
\nonumber\\ &\ll & \left( \frac 2{\epsilon}\right)^{\omega(q)+1}\left(1+\frac{x^{1/2}}q\right) \log x+x^{\epsilon/2}\log x
\nonumber\\ &\ll_{\epsilon} & x^{\epsilon}\left(1+\frac{x^{1/2}}q\right),
\end{eqnarray}
provided $q$ is large enough in terms of $\epsilon$.

Thus, for $1+\epsilon \leq u\leq 2$, we have
\be \psi(q^u;q,1)\ \ll_{\epsilon}\ \frac{q^{u-1} \log (q^u)\log\log q}{(u-1)\log q}+ q^{\epsilon}+q^{\frac u2-1+\epsilon}\ \ll_{\epsilon}\ q^{u-1} \log\log q,\ee
which bounds the integral from $1+\epsilon$ to $\sigma$ by
\be \ll\ \int_{1+\epsilon}^{\sigma} q^{\frac u2-1}\log\log q du\ \ll\ \frac{\log\log q}{\log q}q^{\frac{\sigma}2-1},\ee
completing the proof.
\end{proof}

%%%%%%%%%%%%%%%%%%%%%%%%%%%%%%%%%%%%%%%%%%%%%%%%%%%%%%%%%%%%%%%%%%%%%%%%%%%%%%%%%%%%%%%%%%%%%%%%%%%%%%%%%%%%%%%%%%%%%%%%%%%%%%
%%%%%%%%%%%%%%%%%%%%%%%%%%%%%%%%%%%%%%%%%%%%%%%%%%%%%%%%%%%%%%%%%%%%%%%%%%%%%%%%%%%%%%%%%%%%%%%%%%%%%%%%%%%%%%%%%%%%%%%%%%%%%%
%%%%%%%%%%%%%%%%%%%%%%%%%%%%%%%%%%%%%%%%%%%%%%%%%%%%%%%%%%%%%%%%%%%%%%%%%%%%%%%%%%%%%%%%%%%%%%%%%%%%%%%%%%%%
%%%%%%%%%%%%%%%%%%%%%%%%%%%%%%%%%%%%%%%%%%%%%%%%%%%%%%%%%%%%%%%%%%%%%%%%%%%%%%%%%%%%%%%%%%%%%%%%%%%%%%%%%%%%
%%%%%%%%%%%%%%%%%%%%%%%%%%%%%%%%%%%%%%%%%%%%%%%%%%%%%%%%%%%%%%%%%%%%%%%%%%%%%%%%%%%%%%%%%%%%%%%%%%%%%%%%%%%%%%%%%%%%%%%%%%%%%%
%%%%%%%%%%%%%%%%%%%%%%%%%%%%%%%%%%%%%%%%%%%%%%%%%%%%%%%%%%%%%%%%%%%%%%%%%%%%%%%%%%%%%%%%%%%%%%%%%%%%%%%%%%%%%%%%%%%%%%%%%%%%%%
\section{Results Under GRH (Theorems \ref{first thm} and \ref{thm 3/2})}\label{sec:proofthm32}

In this section we assume GRH (but none of the stronger results about the distribution of primes among residue classes) and prove Theorems \ref{first thm} and \ref{thm 3/2}. The main ingredient in the proofs are the results of \cite{fouvry, BFI, FG, fiorilli}. The following is the needed conditional version.

\begin{theorem}\label{thm fiorilli} Assume GRH. Fix an integer $a\neq 0$ and $\epsilon>0$. We have for $M=M(x)\leq x^{\frac 14}$ that
\be   \sum_{\substack{\frac x{2M}<q\leq \frac x{M} \\ (q,a)=1}} \left( \psi(x;q,a)-\Lambda(a)-\frac {\psi(x)}{\phi(q)}\right)\ =\ \frac{\phi(a)}{a}\frac{x}{2M} \mu_0(a,M) +O_{a,\epsilon} \left(\frac{x}{M^{\frac{3}{2}-\epsilon}}+\sqrt x M (\log x)^2 \right), \ee
where
\be \threecase{\mu_0(a,M) \ := \ }{-\frac 12 \log M-\frac{C_6}2}{if $a=\pm 1$}
{-\frac 12 \log p}{if $a=\pm p^e$}{0}{otherwise,}
\ee
with
\be C_6 \ :=\ \log \pi +1+ \gamma + \sum_p \frac{\log p}{p(p-1)}.\ee
\end{theorem}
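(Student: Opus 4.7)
The plan is to exploit that in the range $x/2M < q \leq x/M$ with $M \leq x^{1/4}$, every modulus satisfies $q \geq x^{3/4}$, so for each such $q$ there are only $O(M)$ integers $n \leq x$ with $n \equiv a \pmod q$, namely $n = a + \ell q$ for $\ell = 0, 1, \dots, \lfloor (x-a)/q\rfloor$. The $\ell = 0$ term contributes exactly $\Lambda(a)$, which is the reason for its subtraction. For $\ell \geq 1$ I would swap the order of summation, parametrizing by $\ell$ via $q = (n-a)/\ell$, transforming the double sum into
\[
\sum_{\substack{x/2M < q \leq x/M \\ (q,a)=1}} \bigl(\psi(x;q,a) - \Lambda(a)\bigr)
\;=\; \sum_{1 \leq \ell \leq 2M}\ \sum_{\substack{n \in I_\ell \\ n \equiv a \pmod \ell \\ ((n-a)/\ell,\, a)=1}} \Lambda(n),
\]
where $I_\ell := \bigl(a + \ell x/(2M),\, a + \ell x/M\bigr]\cap[1,x]$. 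The coprimality condition $((n-a)/\ell, a)=1$ is untangled by M\"obius inversion over divisors of the fixed integer $a$.

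For each $\ell\leq 2M\leq 2x^{1/4}$ the modulus is deep inside the conditional Bombieri--Vinogradov range, and GRH yields, for $(\ell,a)=1$,
\[
\sum_{\substack{y_1 < n \leq y_2 \\ n \equiv a\bmod\ell}}\Lambda(n)
\;=\; \frac{y_2-y_1}{\phi(\ell)}+O\bigl(\sqrt{x}(\log x)^2\bigr),
\]
uniformly for $1\leq y_1\leq y_2\leq x$, and similarly (with main term $0$) when $(\ell,a)>1$. Summing the error over the $O(M)$ relevant values of $\ell$ produces the second error term $O(\sqrt{x}M(\log x)^2)$ in the theorem.

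The combinatorial heart of the argument is then to extract the explicit constant $\mu_0(a, M)$ from the surviving main terms. After M\"obius inversion, these combine into
\[
\frac{\phi(a)}{a}\cdot \frac{x}{2M}\sum_{\substack{\ell \leq 2M\\(\ell,a)=1}}\frac{\ell}{\phi(\ell)}
\;-\;\psi(x)\sum_{\substack{x/2M<q\leq x/M\\(q,a)=1}}\frac{1}{\phi(q)} \;+\; O(M\log M),
\]
after using $|I_\ell|=\ell x/(2M)+O(1)$ and the GRH-strength prime number theorem for the subtracted term. Both sums admit standard asymptotic expansions via Dirichlet-series methods (the Euler product $\prod_p(1 + 1/(p(p-1))(\cdots))$ encodes the relevant constants, including $A=\zeta(2)\zeta(3)/\zeta(6)$, $\gamma$, and $\sum_p \log p/(p^2-p+1)$). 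The leading linear pieces in $x/M$ cancel between the two sums, and what survives is precisely $\frac{\phi(a)}{a}\cdot\frac{x}{2M}\cdot\mu_0(a,M)$. The case split in $\mu_0$ arises from the coprimality-to-$a$ local factors $\prod_{p\mid a}(1-1/p)$ and their logarithmic derivatives $\sum_{p \mid a}\log p/(p-1)$: when $a=\pm 1$ no such factors occur and the full $-\tfrac12\log M -\tfrac{C_6}{2}$ survives (the constant $C_6$ coming from the constant term in the asymptotic expansion of $\sum_{q\leq Y}1/\phi(q)$); when $a=\pm p^e$ a single factor yields $-\tfrac12\log p$; and when $a$ has at least two distinct prime divisors the logarithmic contributions from distinct primes telescope to $0$.

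The hard part will be carrying out this main-term matching precisely enough to recover the explicit constant $C_6 = \log\pi + 1 + \gamma + \sum_p \log p/(p(p-1))$ rather than a more opaque expression, and simultaneously to control the residual error at the sharp level $O(x/M^{3/2-\epsilon})$. The latter is not the crudely absorbed $O(M\log M)$ term displayed above but comes from the truncation tails in the Dirichlet-series expansions of $\sum_{\ell \leq 2M,\,(\ell,a)=1}\ell/\phi(\ell)$ and $\sum_{q\sim x/M,\,(q,a)=1}1/\phi(q)$, where a Perron-type contour shift past the rightmost pole of the associated Dirichlet series $\sum_n (n,a)'n^{-s}/\phi(n)$ (with the Euler factors at primes dividing $a$ removed) yields the stated power saving.
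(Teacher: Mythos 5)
Your overall plan — switching the roles of modulus and complementary divisor via $n = a + \ell q$, applying GRH in the small modulus $\ell$, and extracting the main term from Dirichlet-series asymptotics for $\sum_{\ell}\ell/\phi(\ell)$ and $\sum_q 1/\phi(q)$ — is exactly the method behind the cited result (the paper's own ``proof'' is just a pointer to Remark~1.5 of Fiorilli's paper, which uses this divisor-switching technique going back to Fouvry, Bombieri--Friedlander--Iwaniec, and Friedlander--Granville). The M\"obius inversion over $d\mid a$ is also handled correctly in spirit: since $d>1$ forces $(a,d\ell)>1$, those terms only pick up prime powers of primes dividing $a$ and are negligible, so only $d=1$ contributes a main term. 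Your identification of the two error sources (GRH over $O(M)$ small moduli giving $O(\sqrt{x}M(\log x)^2)$; the Perron contour shift for $Z(s)=\zeta(s)\zeta(s+1)H(s)$ limited by the abscissa $\Re s = -1/2$ of $H$, giving $O(x/M^{3/2-\epsilon})$) is also right.

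However, your displayed ``combined'' main term contains a consequential error. You write $|I_\ell| = \ell x/(2M) + O(1)$ uniformly, but this is only true for $\ell \lesssim M$; once $\ell \gtrsim M(1-a/x)$ the constraint $n\leq x$ truncates the interval and $|I_\ell| = x - a - \ell x/(2M)$, giving a tent-shaped profile. This matters: with the tent, $\sum_{\ell<2M,(\ell,a)=1}|I_\ell|/\phi(\ell)$ has leading term $\sim x\,c_a\log 2$ (where $c_a$ is the residue of $Z_a(s)$ at $s=1$), which cancels against $\psi(x)\sum_{x/2M<q\leq x/M,(q,a)=1}1/\phi(q) \sim x\,c_a\log 2$. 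With your formula $\frac{x}{2M}\sum_{\ell\leq 2M}\ell/\phi(\ell)\sim x\,c_a$, the leading terms do \emph{not} cancel. Relatedly, the prefactor $\phi(a)/a$ in your first displayed sum should not be there at that stage — only $d=1$ survives M\"obius inversion, so the main term is plainly $\sum_\ell |I_\ell|/\phi(\ell)$; the factor $\phi(a)/a$ only emerges at the very end from the local factors at primes $p\mid a$ in the residues of $Z_a(s)$ and of $\sum_{(q,a)=1}1/(\phi(q)q^s)$, together with the $\log\pi$, $\gamma$, and $\sum_p\log p/(p(p-1))$ constants. To complete the proof you would need to replace the displayed expression by the correct tent-weighted sum, say $\frac{x}{2M}\bigl[2T_a(M')-T_a(2M')\bigr] + (x-a)\bigl[S_a(2M')-S_a(M')\bigr]$ with $M':=M(1-a/x)$, $T_a(L):=\sum_{\ell\leq L,(\ell,a)=1}\ell/\phi(\ell)$, $S_a(L):=\sum_{\ell\leq L,(\ell,a)=1}1/\phi(\ell)$, and then carry out the cancellations against $\psi(x)\bigl[S_a(x/M)-S_a(x/2M)\bigr]$.
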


\begin{proof}
See Remark 1.5 of \cite{fiorilli}. Note that the restriction $M=o(x^{\frac 14}/\log x)$ is required for the error term to be negligible compared to the main term, but it can be changed to $M\leq x^{\frac 14}$.

%The proof is very similar to that of Theorem 4.1* of \cite{fiorilli2}. (Note that here we are assuming GRH, which implies Lindelöf's hypothesis.)
\end{proof}

We now proceed to prove Theorems \ref{first thm} and \ref{thm 3/2}. Note that by the averaged 1-level density (Proposition \ref{proposition averaged explicit formula}), the proof is completed by analyzing the average of $T_4(q)$:
\begin{equation} \frac1{Q/2} \sum_{Q/2<q\leq Q}T_4(q)\ = \ 2 \int_0^{\sigma} \left(\frac {f(u)}2 -\frac {f'(u)}{\log Q}\right)\frac1{Q/2}\sum_{Q/2<q\leq Q}\frac{\psi(Q^u;q,1)-\frac{\psi(Q^u)}{\phi(q)}}{Q^{u/2}}\ du.
 \label{integral to understand}
\end{equation}

We break the integral into regions and bound each separately. Going through the proof of Theorem \ref{thm unconditional} and applying GRH, we see that the contribution to the integral from $u \in [0, 1]$ equals
\be- \frac{4\log 2}Q \frac{\zeta(2)\zeta(3)}{\zeta(6)} \int_0^{1} Q^{u/2}\left(\frac {f(u)}2 -\frac {f'(u)}{\log Q}\right) du
+O\left( \frac{\log^2 Q}Q\right).
\label{part of the integral u<1}\ee
We now analyze the three cases of the theorem, corresponding to different support restrictions for our test function.

\begin{proof}[Proof of Theorem \ref{thm 3/2}(2)] To prove \eqref{equation 3/2 1}, we need to understand the part of the integral in \eqref{integral to understand} with $a\leq u \leq 2$. Arguing as in \cite{FG} (see also the proof of Proposition 6.1 of \cite{fiorilli}), we have that for $x^{1/2} \leq Q \leq x$,
\be \sum_{Q/2<q\leq Q} \left(\psi(x;q,1)-\frac{\psi(x)}{\phi(q)} \right)\ \ll\ Q \left(\log (x/Q)+1\right) + \frac{x^{3/2} (\log x)^2}Q.
\label{equation Q can be sqrt}
\ee
The basic idea to obtain this last estimate is to write
$$ \sum_{Q/2<q\leq Q} \psi(x;q,1) = \sum_{\substack{ n\leq x \\ n-1 = qr \\ Q/2<q\leq Q }} \Lambda(n),$$
and to turn this into a sum over $r\leq 2(x-1)/Q$ of the function $\psi(x;r,1)-\psi(rQ/2+1;r,1)$. One then applies GRH and estimates the resulting sum over $r$ using estimates on the summatory function of $1/\phi(r)$. Applying \eqref{equation Q can be sqrt}, the part of the integral in \eqref{integral to understand} with $a\leq u \leq 2$ is
\be \ll\ \int_{a}^{\sigma} \left( Q^{-u/2} (\log (Q^{u-1}) +1)  + Q^{u-2} (\log (Q^u))^2\right)du\ \ll\ Q^{-\frac a2} + Q^{\sigma-2} \log Q. \label{integral between a and sigma}\ee \ \\

\end{proof}

\begin{proof}[Proof of Theorem \ref{thm 3/2}(1)]
% We now prove \eqref{equation 3/2 3}.
We need to study the part of the integral in \eqref{integral to understand} with $1+\kappa \leq u\leq  \frac 32.$
We first see that by \eqref{integral between a and sigma}, the part of the integral with $\frac 43 \leq u \leq \frac 32 $ is
\be \ll\ Q^{-\frac 23} + Q^{\sigma-2}\log Q. \ee

%Since $f$ has compact support in $(-\frac 43,-1-\kappa] \cup [-1,1] \cup [1+\kappa,\frac 43)$,  as $[1+\kappa, 4/3)$ is half-open there exists an $\eta>0$ such that $f$ vanishes in both the intervals $[1,1+\kappa]$ and $[\frac 43-\eta,\frac 43]$.
We turn to the part of the integral with $1+\kappa \leq u\leq  \frac 43.$ We have by Theorem \ref{thm fiorilli} (setting $x:=Q^u$ and $M:=Q^{u-1}$) that it is
\begin{align}\label{eq:1to43rdsetakappa} &=  2\int_{1+\kappa}^{\frac 43} \left(\frac {f(u)}2 -\frac {f'(u)}{\log Q}\right) Q^{-u/2}\Big( -\frac 12 \log (Q^{u-1})- \frac{C_6}2  \notag
\\ &\hspace{6cm}+ O_{\epsilon}\left(Q^{\frac{1-u}{2} (1-\epsilon)}+Q^{\frac 32 u-2}(\log Q^u)^2\right) \Big)du \nonumber\\
&=    -\int_{1+\kappa}^{\frac 43}(  (u-1)\log Q+C_6  ) Q^{-u/2} \left(\frac {f(u)}2 -\frac {f'(u)}{\log Q}\right) du + O_{\epsilon}\left(\frac{Q^{-\frac 12-\kappa(1-\epsilon)}}{\log Q}+Q^{-\frac 23}\log Q\right),
\end{align}
hence \eqref{equation 3/2 3} holds.

\end{proof}
% with $\kappa>0$ a parameter which we will fix later. We have by Theorem \ref{thm fiorilli} (setting $x:=Q^u$ and $M:=Q^{u-1}$) that it is
% %Tayeule Claude
% \begin{align*} &= 2\int_{1+\kappa}^{4/3} \left(\frac {f(u)}2 -\frac {f'(u)}{\log Q}\right) Q^{-u/2}\left( -\frac 12 \log (Q^{u-1})- C_6 + O_{\epsilon}(Q^{\frac{1-u}{2} (1-\epsilon)}) \right)du \\
% &=  -\int_{1+\kappa}^{4/3}(  (u-1)\log Q+2C_6  ) Q^{-u/2} \left(\frac {f(u)}2 -\frac {f'(u)}{\log Q}\right) du + O_{\epsilon}(Q^{-\frac 12-\kappa(1-\epsilon)}).
% \end{align*}

\begin{proof}[Proof of Theorem \ref{first thm}] We now turn to \eqref{equation 3/2 2}, with $f$ supported in $(-\frac 32, \frac 32)$.
%Thus there exists an $\eta>0$ such that $f(x)$ vanishes if $|x| > 4/3-\eta$.
Set $\kappa:=\frac{A\log\log Q}{\log Q}$ with $A\geq 1$ a constant. As the big-Oh constant in \eqref{eq:1to43rdsetakappa} is independent of $\kappa$, we may use \eqref{eq:1to43rdsetakappa} to estimate the contribution to \eqref{integral to understand} from $u \in [1+\kappa, \frac 43]$.  This part of the integral contributes \begin{align} -\int_{1+\kappa}^{4/3 }(  (u-1)\log Q+C_6  ) Q^{-u/2} \left(\frac {f(u)}2 -\frac {f'(u)}{\log Q}\right) du &+ O_{\epsilon}\left(\frac {Q^{-1/2}}{(\log Q)^{A(1-\epsilon)+1}}\right)\notag \\
&\ll \frac {Q^{-1/2}}{(\log Q)^{A/2}}.
\label{part of the integral bigger than 1+kappa}
\end{align}
The part of the integral with $\frac 43 \leq u \leq \frac 32$ was already shown to be $\ll Q^{-\frac 23}+Q^{\sigma-2}\log Q$, and hence is absorbed into the error term since $\sigma< 3/2$.

We now come to the heart of the argument, the part of the integral where $1\leq u \leq 1+\kappa$. Since $f\in C^2(\mathbb R)$, we have that in our range of $u$, $g(u):= \frac {f(u)}2 -\frac {f'(u)}{\log Q}$ satisfies
\begin{eqnarray} g(u) & \ =\ & \frac {f(1)}2 +\frac {f'(1)}2 (u-1) + O((u-1)^2) - \frac {f'(1)}{\log Q} + O\left(\frac{u-1}{\log Q}\right) \nonumber\\
&=& P(u-1) +O\left( \frac {(\log \log Q)^2}{(\log Q)^2} \right),
\end{eqnarray}
where $P(u) := \frac {f(1)}2- \frac {f'(1)}{\log Q} +\frac {f'(1)}2 u$.
%[FAIRE MIEUX: $g(u) = \frac {f(1)}2 +\frac {f'(1)}2 (u-1) + O((u-1)^2) - \frac {f'(1)}{\log Q} + O\left(\frac{u-1}{\log Q}\right)$]
At this point, if $f$ were $C^{K}(\mathbb R)$, we could take its Taylor expansion and get an error of $O_{\epsilon,A}\left( \frac {(\log \log Q)^{K}}{(\log Q)^{K}}\right)$.

We cannot apply Theorem \ref{thm fiorilli} directly since the error term is not got enough for moderate values of $M$. Instead, we argue as in the proof of Proposition 6.1 of \cite{fiorilli}. Slightly modifying the proof and using GRH, we get that
\begin{eqnarray}\label{expansion with r}& & \sum_{Q/2 < q\leq Q} \left( \psi(x;q,a)-\frac{\psi(x)}{\phi(q)} \right) \nonumber\\ & & =\ x\Bigg(-C_1 -\sum_{\substack{r< \frac {x-1}{Q}}}\frac 1{\phi(r)} \left(1-\frac r{x/{Q}}\right)
+\sum_{\substack{r< \frac {x-1}{Q/2}}}\frac 1{\phi(r)} \left(1-\frac r{2x/Q}\right)  \Bigg)
+O_{\epsilon}\left(\frac{x^{3/2+\frac{\epsilon}2}}{Q} \right),  \nonumber\\
%&\hspace{3.7cm}= x\Bigg(C_1  -\sum_{\substack{r\leq \frac {x-1}{Q}}}\frac 1{\phi(r)} \left(1-\frac r{(x-1)/{Q}}\right)  \notag  \\
%&+\sum_{\substack{r\leq \frac {x-1}{Q/2}}}\frac 1{\phi(r)} \left(1-\frac r{2(x-1)/Q}\right)  \Bigg)
%+O\left(\frac{x^{\frac 32}}{Q} (\log x)^2\right).
%%% &=x\left(C_1\log(x/Q) + C_3 -\sum_{\substack{r\leq \frac {x-1}Q}}\frac 1{\phi(r)} \left(1-\frac r{(x-1)/Q}\right)\right)+O\left(\frac{x^{\frac 32}}{Q} (\log x)^2\right),
 \end{eqnarray}
with
\begin{equation}
\label{equation definition C_1}
C_1 \ :=\ \frac{ \zeta(2)\zeta(3)}{\zeta(6)} \log 2.
\end{equation}
(We used that $\sum_{Q/2 < q\leq Q} = \sum_{Q/2 < q\leq x} -\sum_{Q < q\leq x}$, as in the proof of Theorem 4.1* of \cite{fiorilli2}.)
The contribution of the error term in \eqref{expansion with r} to the part of the integral in \eqref{integral to understand} with $1\leq u\leq 1+\kappa$ is (remember $\kappa \log Q = A\log\log Q$)
\be \ll\ \int_1^{1+\kappa}\frac{1}{Q/2} \frac{Q^{\frac{3u}2+\frac{\epsilon u}2} / Q}{Q^{u/2}}\ du\ \ll_{\epsilon}\ Q^{-1+ \epsilon}. \ee

Therefore, all that remains to complete the proof of Theorem \ref{first thm} it to estimate the contribution to \eqref{integral to understand} from $u \in [1, 1+\kappa]$. Using Lemma 5.9 of \cite{fiorilli} to bound the error in replacing $g(u)$ with $P(u-1)$, we find
\begin{align}  &2\int_1^{1+\kappa}g(u) \frac{Q^{\frac{u}{2}}}{Q/2}\Bigg(-C_1-\sum_{\substack{r< \frac {Q^u-1}Q}}\frac 1{\phi(r)} \left(1-\frac r{Q^{u-1}}\right) \notag \\
&\hspace{6cm}+\sum_{\substack{r< \frac {Q^u-1}{Q/2}}}\frac 1{\phi(r)} \left(1-\frac r{2Q^{u-1}}\right)\Bigg)du \notag\\
 &=\ 4\int_1^{1+\kappa}P(u-1) Q^{\frac u2-1}\Bigg(-C_1-\sum_{\substack{r\leq \frac {Q^u-1}Q}}\frac 1{\phi(r)} \left(1-\frac r{Q^{u-1}}\right)\notag\\
&\hspace{2cm}+\sum_{\substack{r\leq \frac {Q^u-1}{Q/2}}}\frac 1{\phi(r)} \left(1-\frac r{2Q^{u-1}}\right)\Bigg)du +O\left( \frac{Q^{-1/2} (\log\log Q)^2}{(\log Q)^{3}}\right); \label{simple integral}
%&\hspace{6cm}+\sum_{\substack{r\leq \frac {Q^u-1}{Q/2}}}\frac 1{\phi(r)} \left(1-\frac r{2(Q^u-1)/Q}\right)\Bigg)du \\
%&=  \frac{2C_1}Q \int_1^{1+\kappa} Q^{u/2} g(u)du
\end{align}
we changed $r<\cdots$ to $r\leq \cdots$ in the sums above, which gives a negligible error term.

Setting $R:= Q^{\kappa} -\frac 1Q$, we compute that
\begin{align}
 &\int_1^{1+\kappa} P(u-1) Q^{\frac u2-1}\sum_{\substack{r\leq \frac {Q^u-1}Q}}\frac 1{\phi(r)} \left(1-\frac r{Q^{u-1}}\right)du \notag \\
&= \ \frac 1Q\sum_{\substack{r\leq Q^{\kappa} -\frac 1Q }}\frac 1{\phi(r)} \int_{1+\frac{\log\left( r+Q^{-1}\right)}{\log Q}}^{1+\kappa}P(u-1)\left(Q^{u/2}-\frac r{Q^{\frac u2-1}}\right)du \notag \\
&= \ \frac 1Q\sum_{\substack{r\leq R }}\frac 1{\phi(r)} \int_{1+\kappa \frac{\log r }{\log R}}^{1+\kappa}P(u-1)\left(Q^{u/2}-\frac r{Q^{\frac u2-1}}\right)du + O_{\epsilon}(Q^{-\frac 32+\epsilon}) \notag,
\end{align}
 the error term coming from the fact that we replaced $\log(r+Q^{-1})$ by $\log r$. Performing two changes of variables, we obtain that this is

\begin{align}
&= \ Q^{-1/2}\sum_{\substack{r\leq R }}\frac 1{\phi(r)} \int_{\kappa \frac{\log r }{\log R}}^{\kappa}P(u)\left(Q^{u/2}-\frac r{Q^{u/2}}\right)du + O_{\epsilon}(Q^{-\frac 32+\epsilon}) \notag  \\
&= \ Q^{-1/2}\sum_{\substack{r\leq R }}\frac 1{\phi(r)} \int_{\frac{\log r }{\log R}}^{1} \kappa P(\kappa v)\left(R^{\frac v2}-\frac r{R^{\frac v2}}\right)dv + O_{\epsilon}(Q^{-\frac 32+\epsilon}). \label{before applying lemma totient}
\end{align}
Let \be F_1 \ := \ -\ 4\zeta\left( \frac 12\right) \prod_p \left( 1+\frac 1{(p-1)p^{1/2}}\right) \text{ and } F_2 \ := \ F_1\left( \frac{\zeta'}{\zeta} \left(\frac 12\right) -\sum_p \frac{\log p}{(p-1)p^{\frac 12}+1}\right).\ee
By Lemma \ref{sum reciprocal totient lemma}, we find that \eqref{before applying lemma totient} equals
\begin{align} &\ \ \frac{\kappa}{Q^{1/2}} \Bigg(E_1 \log R  \int_{-\infty}^1 R^{u/2}vP(\kappa v)dv + E_2 \int_{-\infty}^1 R^{\frac v2}P(\kappa v)dv \notag  \\
 &\hspace{2cm} F_1\frac{P(\kappa)-\kappa P'(\kappa)}{\log R}+F_2\frac{-\kappa P'(\kappa)}{(\log R)^2} +O\left( R^{-1/2} \right)\Bigg) \notag \\
&=\ Q^{-1/2} \Bigg(E_1 \log Q  \int_{-\infty}^{\kappa} Q^{u/2}uP(u)du + E_2 \int_{-\infty}^{\kappa} Q^{u/2}P(u)du \notag \\
&\hspace{2cm}+ F_1\frac{\frac{f(1)}2-\frac{f'(1)}{\log Q}}{\log Q}-F_2\frac{f'(1)}{2(\log Q)^2} + O\left( R^{-1/2} \right)\Bigg).
\label{first thing to combine}
\end{align}

We obtain in an analogous way with $R:=2 Q^{\kappa}- \frac 2{Q}$ that
\begin{multline}
 \int_1^{1+\kappa} P(u-1) Q^{\frac u2-1}\sum_{\substack{r\leq 2\frac {Q^u-1}{Q}}}\frac 1{\phi(r)} \left(1-\frac r{2Q^{u-1}}\right)du \\
=\ Q^{-1/2}\sum_{\substack{r\leq R }}\frac 1{\phi(r)} \int_{\frac{\log (r/2) }{\log (R/2)}}^{1} \kappa P(\kappa v)\left((R/2)^{\frac v2}-\frac r{2(R/2)^{\frac v2}}\right)dv + O_{\epsilon}(Q^{-\frac 32+\epsilon}),
\end{multline}
which by Lemma \ref{sum reciprocal totient lemma} is
\begin{align}  &=\ \frac{\kappa}{Q^{1/2}} \Bigg(E_1 \log (R/2)  \int_{-\infty}^1 (R/2)^{\frac v2}vP(\kappa v)dv \notag \\
&\ \ \ \ \ \ +\ (E_2+E_1\log 2) \int_{-\infty}^1 (R/2)^{\frac v2}P(\kappa v)dv  \notag
+ \sum_{j=1}^n \frac{F_j^{(2)}}{(\log (R/2))^j}  + O(R^{-1/2})\Bigg) \\
&=\ Q^{-1/2} \Bigg( E_1 \log Q  \int_{-\infty}^{\kappa} Q^{u/2}uP(u)du
+ (E_2 +E_1\log 2)\int_{-\infty}^{\kappa} Q^{u/2}P(u)dv \notag \\
&\hspace{3cm}+ \frac{F_1}{\sqrt 2}\frac{\frac{f(1)}2-\frac{f'(1)}{\log Q}}{\log Q}-\frac{F_2+F_1\log 2}{\sqrt 2}\frac{f'(1)}{2(\log Q)^2}  + O(R^{-1/2})\Bigg).
\label{second thing to combine}
\end{align}

We now substitute \eqref{first thing to combine} and \eqref{second thing to combine} in \eqref{simple integral}, to get that \eqref{simple integral} is (notice the remarkable cancellations)

\begin{align} &=  -4C_1 \int_1^{1+\kappa} P(u-1)Q^{\frac u2-1}du +4E_1\log 2 Q^{-\frac 12}\int_{-\infty}^{\kappa} Q^{\frac u2} P(u)du    \notag \\
&\hspace{1cm}+4Q^{-\frac 12}\left(- F_1\frac{\frac{f(1)}2-\frac{f'(1)}{\log Q}}{\log Q}+F_2\frac{f'(1)}{2(\log Q)^2} + \frac{F_1}{\sqrt 2}\frac{\frac{f(1)}2-\frac{f'(1)}{\log Q}}{\log Q}-\frac{F_2+F_1\log 2}{\sqrt 2}\frac{f'(1)}{2(\log Q)^2}\right) \notag \\
&\hspace{4cm}+O\left( Q^{-\frac 12}\frac {(\log\log Q)^2}{(\log Q)^3} + \frac{Q^{-\frac 12}}{(\log Q)^{A/2}} \right),\notag
\end{align}
which by \eqref{equation definition E_i} and \eqref{equation definition C_1} is

\begin{align}
&= 4 \log 2 \frac{\zeta(2)\zeta(3)}{\zeta(6)} \int_{-\infty}^{1} P(u-1)Q^{\frac u2-1}du
 \notag \\
&\hspace{1cm}+(2-\sqrt 2) Q^{-\frac 12}\left(- F_1 \frac{f(1)}{\log Q}+ \bigg(F_2-\frac{\sqrt 2+4}3 F_1\bigg) \frac{f'(1)}{(\log Q)^2} \right) \notag \\
&\hspace{4cm}+O\left( Q^{-\frac 12}\frac {(\log\log Q)^2}{(\log Q)^3} + \frac{Q^{-\frac 12}}{(\log Q)^{A/2}} \right).
\end{align}
But, yet another cancellation is coming: we have that
\be \int_{-\infty}^{1} P(u-1)Q^{\frac u2-1}du\ =\ \int_{-\infty}^{1} g(u)Q^{\frac u2-1}du+O\left( Q^{-\frac 12}\frac {(\log\log Q)^2}{(\log Q)^3}\right),\ee
and so by \eqref{part of the integral u<1} this term cancels (up to the error term $O(Q^{-1})$) with the part of the integral of $T_4(Q)$ with $u\leq 1$ (which is coming from a totally different part of the problem, where there are no primes in arithmetic progressions involved)!

Combining all the terms,
%and after a lengthy simplification find that \eqref{simple integral} equals
\begin{eqnarray} \frac 1{Q/2} \sum_{Q/2<q\leq Q}T_4(Q) & = & (2-\sqrt 2) Q^{-1/2}\left(- F_1 \frac{f(1)}{\log Q}+ \bigg(F_2-\frac{\sqrt 2+4}3 F_1\bigg) \frac{f'(1)}{(\log Q)^2} \right)\nonumber\\  & & \ \ \ \ \ +\ O\left(\frac {Q^{-1/2}}{(\log Q)^{A/2}}+Q^{-1/2}\frac {(\log\log Q)^2}{(\log Q)^3}\right).
\end{eqnarray}

The proof is completed by taking $A=6$.
%, which yields that \eqref{part of the integral bigger than 1+kappa} is $\ll 1/(\log Q)^3$.

\end{proof}

\section{Results under De-averaging Hypothesis (Theorem \ref{thm 4-2eta})}\label{sec:proofthm42eta}

In this section we assume the de-averaging hypothesis (Hypothesis \ref{de-averaging hypothesis}), which relates the variance in the distribution of primes congruent to 1 to the average variance over all residue classes. Explicitly, we assume \eqref{eq:deavehypotheta} holds for some $\delta \in (0,1]$, and show how this allows us to compute the main term in the averaged 1-level density, $D_{1;Q/2,Q}(\widehat{f})$, for test functions $f$ supported in $[-4+2\delta, 4-2\delta]$. (Remember that this hypothesis is trivially true for $\delta = 1$, and expected to hold for any $\delta>0$.)

\begin{proof}[Proof of Theorem \ref{thm 4-2eta}] Starting from \eqref{prime sum term}, we have that
\be T_4(q) \ = \ 2 \int_0^{\infty} \left(\frac {f(u)}2 -\frac {f'(u)}{\log Q}\right)\frac{\psi(Q^u;q,1)-\frac{\psi(Q^u)}{\phi(q)}}{Q^{u/2}}\ du.  \ee Feeding this into Proposition \ref{proposition averaged explicit formula}, we are left with determining
\begin{equation}\label{crosspoint}
\frac1{Q/2}\sum_{Q/2<q\leq Q} T_4(q) \ = \ \frac1{Q/2}\int_0^{\sigma} \left(\frac {f(u)}2 -\frac {f'(u)}{\log Q}\right)Q^{-u/2}\sum_{Q/2<q\leq Q} \left(\psi(Q^u;q,1)-\frac{\psi(Q^u)}{\phi(q)} \right) du.
\end{equation}
%We now determine the asymptotic value of the average 1-level density, $D_{1;Q/2,Q}(\widehat{f})$, over $Q/2<q\leq Q$. We must divide %the average of $T_4(q)$ by the weighted size of the family, which satisfies \be Q^2 \ \ll\ \sum_{Q/2 < q \le Q} \frac1{\phi(q)} \sum_{\chi %\bmod q} 1 \ \ll \ Q^2. \ee Thus, any terms that are $o(Q^2)$ do not contribute to the average 1-level density in the limit.

We have already seen in the proof of Theorem \ref{thm unconditional} that the part of the integral in \eqref{crosspoint} with $0\leq u\leq 1$ is
$O(Q^{-1/2})$. For the part where $u\geq 1$, the Cauchy-Schwarz inequality shows that its contribution to the integral in \eqref{crosspoint} is
\be \ll\ \frac1{Q/2}\int_1^{\sigma} Q^{-u/2} \left| \sum_{Q/2<q\leq Q} \left(\psi(Q^u;q,1)-\frac{\psi(Q^u)}{\phi(q)} \right)^2 \right|^{1/2} \cdot \left| \sum_{Q/2<q\leq Q} 1^2 \right|^{1/2}  du. \ee
Now, by Hypothesis \ref{de-averaging hypothesis}, this is
%[FAIRE $u\leq 2$ SÉPARÉMENT]
\begin{equation} \ll\ \frac1{Q/2}\int_1^{\sigma} Q^{-u/2} \cdot Q^{\frac{\delta-1}2}\Bigg(\sum_{Q/2<q\leq Q} \sum_{\substack{1\leq a\leq q \\ (a,q)=1}}\left(\psi(Q^u;q,a)-\frac{\psi(Q^u)}{\phi(q)} \right)^2 \Bigg)^{1/2} \cdot Q^{1/2}\ du.
%& \ll  Q^{\delta-1} \log\log Q \int_0^{\sigma} Q^{-u/2}\sum_{Q<q\leq 2Q} \sum_{(a,q)=1}\left|\psi(Q^u;q,a)-\frac{\psi(Q^u)}{\phi(q)} \right| du.
\label{equation where the variance appears}
\end{equation}
We now use a result of Goldston and Vaughan \cite{GV}, which states that under GRH we have for $1\leq Q\leq x$ that
\begin{eqnarray} & & \sum_{q\leq Q} \sum_{\substack{1\leq a\leq q \\ (a,q)=1}}\left(\psi(x;q,a)-\frac{\psi(x)}{\phi(q)} \right)^2  \nonumber\\ & & \ \ \ \ \ \ =\ Qx\log Q-cxQ + O_{\epsilon}\left( Q^2(x/Q)^{\frac 14+\epsilon}+x^{3/2} (\log 2x)^{5/2} (\log\log 3x)^2\right),
\label{equation GV}
\end{eqnarray}
where $c:=\gamma+\log 2\pi +1 +\sum_p \frac{\log p}{p(p-1)}$.

We now split the range of integration into the two subintervals $1\leq u \leq 2$ and $2\leq u\leq \sigma$. In the first range, we have that for $\epsilon>0$ small enough, $u+1\geq \max(7/4+u/4+\epsilon(u-1),3u/2)$, so \eqref{equation GV} implies that
\begin{equation}\sum_{q\leq Q} \sum_{\substack{1\leq a\leq q \\ (a,q)=1}}\left(\psi(x;q,a)-\frac{\psi(x)}{\phi(q)} \right)^2\ \ll\ Qx(\log x)^{3}
\label{bound for Q>x^1/2}
\end{equation}
(which, up to $x^{\epsilon}$, follows from Hooley's original result \cite{hooley}),
so we get that the part of \eqref{equation where the variance appears} with $1\leq u\leq 2$ is
\be \ll\ Q^{\frac{\delta}2-1} \int_1^{2} Q^{-u/2} Q^{\frac{u+1}{2}} (\log Q)^{3/2}\ du\ \ll\ Q^{\frac{\delta-1}2} (\log Q)^{3/2},\ee which is $o(1)$ if $\delta < 1$.

We now examine the second interval, that is $2\leq u\leq \sigma$. In this range, \eqref{equation GV} becomes
\begin{equation}\sum_{q\leq Q} \sum_{\substack{1\leq a\leq q \\ (a,q)=1}}\left(\psi(x;q,a)-\frac{\psi(x)}{\phi(q)} \right)^2\ \ll\ x^{3/2} (\log x)^{5/2}(\log\log x)^2
\label{bound for Q<x^1/2}
\end{equation}
(which, up to a factor of $x^{\epsilon}$, follows from Hooley's original result \cite{hooley}).
We thus get that the part of \eqref{equation where the variance appears} with $2\leq u\leq \sigma$ is
\be \ll\ \frac{Q^{\frac{\delta}2}}{Q/2} \int_2^{\sigma} Q^{-u/2} Q^{3u/4} (u\log Q)^{5/4} \log\log (Q^u) du\ \ll\ Q^{\frac{\sigma+2\delta}{4}-1} (\log Q)^{1/4} \log\log Q.\ee
If $\sigma<4-2\delta$ then the above is $o(1)$, completing the proof.
\end{proof}

%%%%%%%%%%%%%%%%%%%%%%%%%%%%%%%%%%%%%%%%%%%%%%%%%%%%%%%%%%%%%%%%%%%%%%%%%%%%%%%%%%%%%%%%%%%%%%%%%%%%%%%%%%%%%%%%%%%%%%%%%%%%%%
%%%%%%%%%%%%%%%%%%%%%%%%%%%%%%%%%%%%%%%%%%%%%%%%%%%%%%%%%%%%%%%%%%%%%%%%%%%%%%%%%%%%%%%%%%%%%%%%%%%%%%%%%%%%%%%%%%%%%%%%%%%%%%
%%%%%%%%%%%%%%%%%%%%%%%%%%%%%%%%%%%%%%%%%%%%%%%%%%%%%%%%%%%%%%%%%%%%%%%%%%%%%%%%%%%%%%%%%%%%%%%%%%%%%%%%%%%%
%%%%%%%%%%%%%%%%%%%%%%%%%%%%%%%%%%%%%%%%%%%%%%%%%%%%%%%%%%%%%%%%%%%%%%%%%%%%%%%%%%%%%%%%%%%%%%%%%%%%%%%%%%%%
%%%%%%%%%%%%%%%%%%%%%%%%%%%%%%%%%%%%%%%%%%%%%%%%%%%%%%%%%%%%%%%%%%%%%%%%%%%%%%%%%%%%%%%%%%%%%%%%%%%%%%%%%%%%%%%%%%%%%%%%%%%%%%
%%%%%%%%%%%%%%%%%%%%%%%%%%%%%%%%%%%%%%%%%%%%%%%%%%%%%%%%%%%%%%%%%%%%%%%%%%%%%%%%%%%%%%%%%%%%%%%%%%%%%%%%%%%%%%%%%%%%%%%%%%%%%%
\section{Results under Montgomery's Hypothesis (Theorem \ref{strongest thm})}\label{sec:montstrong}

We continue our investigations beyond the GRH, and assume a smoothed version of Montgomery's hypothesis, Hypothesis \ref{montgomery weakest}. Interestingly, this assumption allows us to compute the main term of the 1-level density, $D_{1;q}(\widehat{f})$, for test functions of arbitrarily large (but finite) support. While similar results have been previously observed \cite{MilSar}, we include a proof both for completeness and because these observations are not in the literature.

\begin{proof}[Proof of Theorem \ref{strongest thm}]
As we are fixing the modulus, we take $Q:=q$. By the explicit formula from Proposition \ref{proposition explicit formula}, we have
\begin{eqnarray}
D_{1;q}(\widehat{f}) & \ = \ & \frac{f(0)}{\log q} \left(  \log q -\log(8\pi e^{\gamma})-\sum_{p\mid q}\frac{\log p}{p-1}\right)
+\int_0^{\infty}\frac{f(0)-f(t)}{q^{t/2}-q^{-t/2}} dt \nonumber\\ && \ \ -\ \frac{2}{\log q} \left(\sum_{n\equiv 1 \bmod q}-\frac 1{\phi(q)}\sum_{n} \right)\frac{\Lambda(n)}{n^{1/2}} f\left(
\frac{\log n}{\log q}\right)+O\left(\frac{1}{\phi(q)}\right).
\end{eqnarray}

Let $\sigma:=\sup (\text{supp}f) < \infty$. We proved in \S\ref{section terms} that the only terms that are not $O(1/\log q)$ are the leading term $f(0)$ and possibly the prime sum, which we now study. We have
%\begin{align*} T_4 &= -\frac{2}{\log Q} \left(\sum_{n\equiv 1 \bmod q}-\frac 1{\phi(q)}\sum_{n} \right)\frac{\Lambda(n)}{n^{1/2}} f\left(
%\frac{\log n}{\log q}\right) \\
%&=  -\frac{2}{\log q} \int_1^{\infty} t^{-\frac 12} %f\left( \frac{\log t}{\log q}\right) d\left(\psi(t;q,a)-\frac{\psi(t)}{\phi(q)}\right) \\
%&= \frac{2}{\log q} \int_1^{\infty} \frac{\frac 12 f\left( \frac{\log t}{\log q}\right)-\frac 1{\log Q}f'\left( \frac{\log t}{\log q}\right) }{t^{\frac 32}}\left(\psi(t;q,a)-\frac{\psi(t)}{\phi(q)}\right) dt.
%\end{align*}
\be T_4(q) \ =
\ 2 \int_0^{\infty} \left(\frac {f(u)}2 -\frac {f'(u)}{\log q}\right)\frac{\psi(q^u;q,1)-\frac{\psi(q^u)}{\phi(q)}}{q^{u/2}}\ du.\ee
In the proof of Theorem \ref{thm unconditional} we determined that the part of the integral with $0\leq u\leq 1$ is $O(q^{-1/2})$. From the proof of Theorem \ref{thm millergoesal}, the part with $1\leq u\leq 2$ is $O(\frac{\log\log q}{\log q})$.\\

%We have already seen in the last sections that the part of the integral where $0\leq u\leq 2$ is $o(1)$.

\begin{enumerate}

\item \emph{Proof of Theorem \ref{strongest thm}(1).} For the rest of the integral, we use Hypothesis \ref{montgomery weakest}. Note that $u\geq 2$, so $x=q^u\geq q^2$ with $u \le \sigma$, hence we can replace $o_{x\rightarrow \infty}$ by $o_{q\rightarrow \infty}$. An integration by parts gives that the rest of the integral is
%$$ \ll \frac{1}{\log q}\int_1^{q^{\sigma}} \frac{(\log t)^2}{t (\log q)^2 \log\log q} dt \ll \frac 1{\log\log q}. $$
\begin{eqnarray} &= \ & 0 - \left(\frac {f(2)}2 -\frac {f'(2)}{\log q}\right) \frac{\psi_2(q^2;q,1)-\frac{\psi_2(q^2)}{\phi(q)}}{q}  \nonumber\\
& & \ \ \ \ \ \ - \ 2 \int_0^{\infty} \left(\frac {3f(u)}4 -\frac {2f'(u)}{\log q} +\frac {f''(u)}{(\log q)^2}\right)\frac{\psi_2(q^u;q,1)-\frac{\psi_2(q^u)}{\phi(q)}}{q^{u/2}}\ du\nonumber\\
&= \ & \frac{o(q)}{q}+\int_{2}^{\sigma} \left(|f(u)|+|f'(u)|+|f''(u)|\right) \frac{o(q^{u/2})}{q^{u/2}} du \ = \ o(1),
\end{eqnarray}
%$$ \ll \int_2^{\sigma} \frac{(u \log q)^2}{ (\log q)^2 \Psi(q)}\ du \ll_{\sigma} \frac 1{\Psi(q)}, $$
proving the claim. Note that we are using the smoothed version of the prime sum.\\

\item \emph{Proof of Theorem \ref{strongest thm}(2).} We already know that the part of the integral with $0\leq u\leq 1$ is $\ll q^{-1/2}$. Taking $\epsilon:=\epsilon'/\sigma$ in Hypothesis \ref{montgomery weaker}, the rest of the integral is $O\left(\int_1^{\sigma} q^{\epsilon u-\theta }du\right)$, which is $O\left(q^{\epsilon'-\theta}\right)$ and thus negligible if we may take $\theta > 0$.

\end{enumerate}
\end{proof}

\begin{rek} Depending on our assumptions about the size of the error term in the distribution of primes in residue classes, we may allow $\sigma$ to grow with $Q$ at various explicit rates.
\end{rek}

\ \\

\end{document}